\newtheorem{teo}{Theorem}[section]
\newtheorem{pro}[teo]{Proposition}
\newtheorem{coro}[teo]{Corollary}
\newtheorem{lem}[teo]{Lemma}
\theoremstyle{definition}
\newtheorem{defi}[teo]{Definition}
\newtheorem{exam}[teo]{Example}
\newtheorem{rem}[teo]{Remark}
\newcommand{\N}{\mathbb N}
\newcommand{\Z}{\mathbb Z}
\newcommand{\R}{\mathbb R}
\newcommand{\C}{\mathbb C}
\newcommand{\Xo}{(X,0)}
\newcommand{\Oxo}{\mathcal{O}_{X,0}}
\newcommand{\Onxo}{\mathcal{O}_{\bar{X},0}}
\newcommand{\Oo}{\mathcal{O}}
\newcommand{\I}{\mathcal{I}}
\newcommand{\m}{\mathfrak{m}}
\newcommand{\A}{\mathcal{A}}
\newcommand{\B}{\mathcal{B}}
\newcommand{\Ga}{\Gamma}
\newcommand{\ga}{\gamma}
\newcommand{\sd}{\check{\sigma}}
\newcommand{\vp}{\varphi}
\newcommand{\ig}{I_{\Gamma}}
\newcommand{\xg}{X_{\Gamma}}
\DeclareMathOperator{\edim}{edim}
\DeclareMathOperator{\mult}{mult}
\newcommand{\pol}{\C[z_1,\ldots,z_n]}
\newcommand{\cs}{\C\{z_1,\ldots,z_n\}}
\newcommand{\fs}{\C[[z_1,\ldots,z_n]]}
\newcommand{\pg}{\C[\Ga]}
\newcommand{\pgg}{\C[t^{\ga_1},\ldots,t^{\ga_n}]}
\newcommand{\csg}{\C\{\Ga\}}
\newcommand{\fsg}{\C[[\Ga]]}
\begin{document}

\title{On the Lipschitz Saturation of Toric Singularities}

\author{ Daniel Duarte, 
Arturo E. Giles Flores}

\maketitle

\begin{abstract}
We begin the study of Lipschitz saturation for germs of toric singularities. By looking at their associated analytic algebras, we prove that if $(X,0)$ is a germ of toric singularity with smooth normalization then its Lipschitz saturation is again toric. Finally we show how to calculate the Lipschitz saturation for some families of toric singularities starting from the semigroup that defines them.
\end{abstract}

\section*{Introduction} 
  
For a germ $(X,0) \subset (\C^n,0)$ of reduced complex analytic singularity, the algebra of germs of Lipschitz meromorphic functions is an analytic algebra that sits between $O_{X,0}$ and its normalization $\overline{O_{X,0}}$. The definition is algebraic, and  was inspired by Zariski's theory of saturation, whose objective was to establish the foundations for an algebraic theory of equisingularity as seen in \cite{Z68}, \cite{Z71a}, \cite{Z71b} and \cite{Z75}.  It was first studied by Pham and Teissier in \cite{Ph-Te20}, where they prove that Lipschitz saturation and Zariski saturation coincide for hypersurfaces.  In this paper they also introduced a relative notion of Lipschitz saturation whose algebraic treatment was further studied in \cite{Bo74}, \cite{Bo75}, \cite{Lip75}. Beyond the algebraic relevance of the construction, the associated germ $(X^s,0)$ is important in the study of biLipschitz equisingularity. The case of curves is pretty well understood and described in \cite{BGG80}, \cite{Fe03}, \cite{GiSiSn22}, \cite{GiNoTe20}, \cite{NePi14},  \cite{Ph-Te20}. However not much is known in higher dimensions. In this work we begin the study of the Lipschitz saturation for toric singularities of arbitrary dimension.\\

Our study is inspired by the fact that the Lipschitz saturation of a germ of irreducible curve is a toric curve, i.e., is parametrized by monomials. Moreover, there exists an explicit procedure for calculating its corresponding numerical semigroup starting from the semigroup of the curve. Although there is no reason to believe that the Lipschitz saturation of a germ of arbitrary dimension has toric structure, we may still ask the following question: starting with a toric singularity, is the Lipschitz saturation also toric? If so, is there a procedure for computing the corresponding affine semigroup from the semigroup of the toric singularity? \\

We answer the first question in the situation that naturally generalizes the case of toric curves, that is, toric varieties having smooth normalization. Regarding the question of the description of the corresponding semigroup, we provide an answer for some families of toric singularities. 
An interesting semigroup theory aspect still lingers. The numerical semigroup of the Lipschitz saturation of a germ of toric curve can be calculated via the semigroup theoretic saturation as described in \cite{RG09}. One can ask: if we start with a toric germ $(X,0)$, is there an analogous operation on affine semigroups that calculates the affine semigroup of $(X^s,0)$? We conclude the paper by exhibiting examples showing that some properties of Lipschitz saturation of curves are no longer true in higher dimension. For instance, the embedding dimension has a different behavior.\\

The paper is divided as follows. In section \ref{secLips} we recall the general construction of Lipschitz saturation and its main properties. We also recall some known facts in the case of curves. Furthermore, we prove that if a germ $(Y,0)$ can be seen as the generic linear projection of another germ $(X,0)$, then their Lipschitz saturations are isomorphic (proposition \ref{ProyGen}). This proposition has important consequences, for instance, it allows us to build examples of  singularities that are not isomorphic to $(X,0)$ with toric Lipschitz saturation isomorphic to $(X^s,0)$ and toric normalization isomorphic to $(\overline{X},0)$.\\

In section \ref{ToricSing} we establish what we mean by a toric singularity $(X,0)$, which is basically taking the germ at the origin of an affine toric variety. We summarize some important facts about the passage from the algebraic toric variety to the germ of analytic space. The results of this section are essentially known (see, for instance, \cite{GP00b}). We include them here for the sake of completeness.\\
   
In section \ref{SatofTor} we prove our first main theorem: the Lipschitz saturation $(X^s,0)$ of a toric singularity whose normalization is smooth is again toric (theorem \ref{SatTor}). The key idea towards this result is that for every $f\in\Oxo^s$ all of its monomials are also in $\Oxo^s$. We believe this result is true for toric singularities in general. However, the smooth normalization hypothesis is there for technical reasons (remark \ref{Tech}). As a first application, we show that the Whitney Umbrella singularity is Lipschitz saturated (see example \ref{WU}).\\

In section \ref{TorSurf} we use the tools previously developed to give a combinatorial description of the Lipschitz saturation $(X^s,0)$ for some families of toric singularities. More concretely, given the semigroup $\Ga$ corresponding to the toric singularity $\Xo$, we explicitly describe the semigroup $\Ga^s$ of its Lipschitz saturation (see proposition \ref{ProdCurv} and theorem \ref{CalculoLip}). As a byproduct we can compute the embedding dimension of $(X^s,0)$ in these cases (see corollaries \ref{edim-mult prod} and \ref{edim-mult hypersurf}). We conclude by illustrating all these results in several examples.

\section{Lipschitz saturation of complex analytic germs}\label{secLips}

      The definition of Lipschitz saturation of a reduced complex analytic algebra $\Oxo$ is based on the concept of integral dependence on an ideal. Given an element 
   $r$ and an ideal $I$ in a ring $R$, we say that $\mathit{r}$  \textit{is integral over} $\mathit{I}$ if $r$ satisfies a relation of the form
   \[ r^m + a_1r^{m-1}+ a_2r^{m-2}+\cdots +a_{m-1}r + a_m=0,\]
   for some integer $m>0$, with $a_j \in I^j$  for $j=1,2,\ldots,m$.  The set $\overline{I}$ consisting of those elements of $R$ which are integral over $I$ is an ideal
   called the \textit{integral closure of} $I$ in $R$ (see \cite{HS06}, \cite{LT08}). \\
   
      
    Let us assume for simplicity that $(X,0)$ is irreducible and let $n^*: \Oxo \hookrightarrow \overline{\Oxo}$ be the integral closure of $\Oxo$ in its 
    field of fractions. Geometrically, this corresponds to the normalization map $n:(\overline{X},0) \to (X,0)$ and if we consider the holomorphic map
    \[ \left( \overline{X} \times_X \overline{X}, (0,0)\right) \hookrightarrow \left( \overline{X} \times \overline{X}, (0,0), \right)\]
   we get a surjective map of analytic algebras
   \[ \Psi: \overline{\Oxo} \widearc{\otimes}_\C \overline{\Oxo} \longrightarrow \overline{\Oxo} \widearc{\otimes}_{\Oxo} \overline{\Oxo},\]
   where $\widearc{\otimes}$ denotes the analytic tensor product which is the operation on the analytic algebras that corresponds to the fibre product
   of analytic spaces (for more details see \cite{Ada12} and \cite{GiNoTe20}).
   
   \begin{defi}\label{DefSat} Let $I_\Delta$  be the kernel of the morphism $\Psi$ above. We define the Lipschitz saturation $\Oxo^s$ of $\Oxo$ as the 
   algebra
   \[ \Oxo^s:= \left\{ f \in \overline{\Oxo} \, | \, f \widearc{\otimes}_\C 1 - 1\widearc{\otimes}_\C f  \in \overline{I_\Delta}\right\}.\]
   \end{defi}
   
   \begin{rem} \label{Rem1}A detailed discussion of the following facts can be found in \cite{Ph-Te20, GiNoTe20, Tei82}:
   \begin{enumerate}
   \item $\Oxo^s$ is an analytic algebra and coincides with the ring of germs of meromorphic functions on $(X,0)$ which are locally Lipschitz
           with respect to the ambient metric.
   \item We have injective ring morphisms
            \[ \Oxo \hookrightarrow \Oxo^s \hookrightarrow \overline{\Oxo}. \]
   \item The corresponding Lipschitz saturation map
             \[\zeta:(X^s,0) \to (X,0)\]      
             is a biLipschitz homeomorphism, induces an isomorphism outside the non-normal locus of $X$ and preserves the multiplicity, i.e.
             \[\textrm{mult }(X^s,0) = \textrm{ mult }(X,0).\]
             Moreover, it can be realized as a generic linear projection in the sense of \cite[Def. 8.4.2]{GiNoTe20}. 
   \item $\overline{\Oxo^s} =\overline{\Oxo}$ and the holomorphic map induced by $\Oxo^s \hookrightarrow \overline{\Oxo}$,
           \[\overline{X}\stackrel{n_s}{\longrightarrow} X^s, \]
          is the normalization map of $X^s$. Moreover the map
       \[n=\zeta \circ n_s : \overline{X} \to X\] is the normalization map of $X$.
   \end{enumerate} 
   \end{rem}

       Aside from these facts, little else is known in the general case. However, in the case of curves, the saturation has some very interesting equisingularity properties.
    First, Pham and Teissier proved that for a \emph{plane curve} $(X,0) \subset (\C^2,0)$ the Lipschitz saturation $\Oxo^s$ determines and is determined by the characteristic
    exponents of its branches and their intersection multiplicities, in particular \emph{the curve $(X^s,0)$ is an invariant of the equisingularity class of $(X,0)$} (See \cite{Ph-Te20}
    and \cite[Prop. VI.3.2]{BGG80}).  \\
    
      In the irreducible case (branches) they also prove that the \emph{the curve $(X^s,0)$ is always a toric curve,} (i.e parametrized by monomials), and if we have a parametrization of $(X,0) \subset (\C^2,0)$ 
         \[ t \longmapsto \left(\varphi_1(t),\varphi_2(t)\right); \hspace{0.2in} \varphi_1,\varphi_2 \in \C\{t\},\]
    then there is a simple procedure to calculate a parametrization of $(X^s,0)$. \begin{enumerate}
    \item Calculate the set of characteristic exponents $E:=\{\beta_0,\beta_1,\ldots,\beta_g\}$ of $(X,0)$.
    \item Calculate the smallest saturated numerical semigroup $\tilde{E} \subset \N$ containing $E$ as follows \cite[Chapter 3, Section 2]{RG09}:  
       \[ \widetilde{E_0}:= E \cup \beta_0 \cdot \N; \]
       \[ \widetilde{E_1}:= \widetilde{E_0} \cup \left\{ \beta_1 + ke_1 \,| \, k\in \N \right\}, \hspace{0.2in} e_1=\textrm{ gcd }\{\beta_0,\beta_1\};\]   
        \[ \widetilde{E_{j+1}}:= \widetilde{E_j} \cup \left\{ \beta_{j+1} + ke_{j+1} \,| \, k\in \N \right\}, \hspace{0.2in} e_{j+1}=\textrm{ gcd }\{e_j,\beta_{j+1}\};\]  
        \[ \tilde{E}=\widetilde{E_g}.\]    
    \item If $\{ a_1, \ldots, a_{\beta_0}\}$  is the minimal system of generators of $\tilde{E}$ then 
          \[ t \longmapsto \left(t^{a_1}, \ldots, t^{a_{\beta_0}}\right) \]
          is a parametrization of $(X^s,0)$.  Recall that in this setting $\beta_0$ is the multiplicity of $(X,0)$ and so we get that \emph{the embedding dimension of $(X^s,0)$ is equal to the multiplicity of $(X,0)$.}
    \end{enumerate}
      
        These results extend well beyond the plane curve case: in \cite[Thm VI.0.2, Prop. VI.3.1]{BGG80} the authors prove that for a curve $(X,0) \subset (\C^n,0)$,
        and a generic lineal projection $\pi: (\C^n,0) \to (\C^2,0)$ the Lipschitz saturations $\Oxo$ of $(X,0)$ and  $O_{\pi(X),0}^s$ of the plane curve 
        $(\pi(X),0) \subset (\C^2,0)$ are isomorphic. Even more, by \cite[Thm. 4.12]{GiSiSn22} we get that two germs of curves $(X,0)$ and $(Y,0)$ are bi-Lipschitz 
        equivalent if and only if their Lipschitz saturations are isomorphic.  In this sense \emph{the saturated curve $(X^s,0)$ can be seen as a canonical representative   
        of the bi-Lipschitz equivalence class of $(X,0)$.}

    \begin{exam}
       Let $(X,0) \subset (\C^3,0)$ be the space curve with normalization map
       \begin{align*} \eta: (\C,0) & \longrightarrow (X,0) \\ t &\longmapsto (t^6,t^{11}-t^9, t^{11}+t^9). \end{align*}
       For this curve, the projection on the first two coordinates is generic, giving us the plane curve
        \[ t \longmapsto (t^6,t^{11}-t^9)\]
        with characteristic exponents $E=\{6,9,11\}$. 
        
        Following the procedure described above, we get the saturated numerical semigroup $\tilde{E}$ with 
        minimal system of generators $\{6,9,11,13,14,16\}$. This determines a parametrization of the saturated curve $(X^s,0) \subset (\C^6,0)$ of the
        form:
        \begin{align*}    \eta^s:(\C,0) &\longrightarrow (X^s,0) \\
                                 \tau &\mapsto (\tau^6,\tau^9,\tau^{11},\tau^{13},\tau^{14},\tau^{16}).\end{align*}
        
  \end{exam} 
  
      Going back to the general case, we can prove that, just as in the case of curves, the Lipschitz saturation remains unchanged under generic
      linear projections.
                
     \begin{pro} \label{ProyGen}
      Let $(X,0) \subset (\C^n,0)$ be a germ of reduced and irreducible singularity, and let  $\pi: (\C^n,0) \to (\C^m,0)$
     be a generic linear projection with respect to $(X,0)$. Then $(X,0)$ and its image germ $(\pi(X),0)$ have isomorphic Lipschitz saturations, i.e.:
        \[ \Oxo^s \cong \mathcal{O}_{\pi(X),0}^s.\]
     \end{pro}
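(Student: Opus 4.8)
The plan is to present both Lipschitz saturations as subalgebras of one and the same normalization, to reduce the asserted equality to a single Łojasiewicz-type inequality, and to obtain that inequality from the defining properties of a generic linear projection.

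\emph{A common normalization.} Since $\pi$ is a generic linear projection with respect to $(X,0)$, the restriction $\pi|_X\colon(X,0)\to(\pi(X),0)$ is finite, surjective and birational onto its image; hence the composite $(\overline{X},0)\xrightarrow{\,n\,}(X,0)\xrightarrow{\,\pi\,}(\pi(X),0)$ is a finite, surjective and birational morphism out of a normal germ, and is therefore the normalization map of $(\pi(X),0)$. Consequently $\overline{\mathcal{O}_{\pi(X),0}}\cong\overline{\Oxo}$, compatibly with the inclusions $\mathcal{O}_{\pi(X),0}\hookrightarrow\Oxo\hookrightarrow\overline{\Oxo}$ induced by $\pi^{*}$ and $n^{*}$; from now on I view $\mathcal{O}_{\pi(X),0}$, $\Oxo$, $\mathcal{O}_{\pi(X),0}^{s}$ and $\Oxo^{s}$ as subalgebras of $\overline{\Oxo}$.

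\emph{The easy inclusion, and reduction of the hard one.} By Remark~\ref{Rem1}(1), under this identification $\Oxo^{s}$ is the set of $f\in\overline{\Oxo}$ whose lift $\tilde f$ to $\overline{X}$ satisfies $\lvert\tilde f(a)-\tilde f(b)\rvert\le C\,\lVert n(a)-n(b)\rVert$ for $a,b$ near $0$, while $\mathcal{O}_{\pi(X),0}^{s}$ is the set of those $f$ with $\lvert\tilde f(a)-\tilde f(b)\rvert\le C\,\lVert \pi(n(a))-\pi(n(b))\rVert$ near $0$. Since $\pi$ is linear, $\lVert \pi(n(a))-\pi(n(b))\rVert\le\lVert\pi\rVert\,\lVert n(a)-n(b)\rVert$, so the second estimate implies the first, whence $\mathcal{O}_{\pi(X),0}^{s}\subseteq\Oxo^{s}$. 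For the opposite inclusion it is enough to check that each coordinate function $\bar z_i$, $i=m+1,\dots,n$, belongs to $\mathcal{O}_{\pi(X),0}^{s}$; equivalently, recombining components, that
\[ \lVert n(a)-n(b)\rVert\ \le\ C'\,\lVert \pi(n(a))-\pi(n(b))\rVert \qquad\text{for }a,b\in\overline{X}\text{ near }0, \]
for then any $f\in\Oxo^{s}$ satisfies $\lvert\tilde f(a)-\tilde f(b)\rvert\le C\,\lVert n(a)-n(b)\rVert\le CC'\,\lVert \pi(n(a))-\pi(n(b))\rVert$, i.e. $f\in\mathcal{O}_{\pi(X),0}^{s}$. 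The displayed inequality says precisely that $\pi|_X$ is bi-Lipschitz onto its image. (Equivalently, in the language of Definition~\ref{DefSat}: since $\pi$ forgets the coordinates $z_{m+1},\dots,z_n$, one has $I_{\Delta}^{X}=I_{\Delta}^{\pi(X)}+(g_{m+1},\dots,g_n)$ with $g_i:=\bar z_i\widearc{\otimes}1-1\widearc{\otimes}\bar z_i$, and the inclusion $\Oxo^{s}\subseteq\mathcal{O}_{\pi(X),0}^{s}$ amounts to $g_i\in\overline{I_{\Delta}^{\pi(X)}}$ for $i>m$.)

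\emph{The crux.} It remains to prove the displayed inequality, and this is the only non-formal ingredient. It is exactly what the notion of a generic linear projection in the sense of \cite[Def.~8.4.2]{GiNoTe20} is built to guarantee: such a projection is bi-Lipschitz onto its image (for curves this is \cite[Thm.~VI.0.2, Prop.~VI.3.1]{BGG80}, and it is the very property invoked for the saturation map $\zeta$ in Remark~\ref{Rem1}(3)). Granting it for $\pi$, the two estimates of the previous paragraph become equivalent, so $\Oxo^{s}=\mathcal{O}_{\pi(X),0}^{s}$ inside $\overline{\Oxo}$, which gives the claimed isomorphism. I expect the verification of this bi-Lipschitz property for generic projections to be the main obstacle to a fully self-contained proof; I would handle it by quoting \cite{GiNoTe20} (and \cite{BGG80} for curves) rather than reproving it here.
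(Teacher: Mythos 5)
Your argument is correct and follows essentially the same route as the paper: both proofs first identify the normalization of $(\pi(X),0)$ with that of $(X,0)$ and then reduce everything to a single property of generic projections, your bi-Lipschitz (\L{}ojasiewicz) inequality being exactly the metric translation, via \cite{LT08}, of the equality of integral closures $\overline{J}=\overline{J_\pi}$ in $\mathcal{O}_{X\times X,(0,0)}$ that the paper quotes from \cite[Prop.~8.5.11]{GiNoTe20}. The only cosmetic difference is that the paper stays algebraic throughout, transporting $\overline{J}=\overline{J_\pi}$ to $\overline{I_{\Delta_X}}=\overline{I_{\Delta_{\pi(X)}}}$ through $(\eta\times\eta)^*$ by persistence of integral closure, which is precisely the content of your parenthetical remark about $g_i\in\overline{I_{\Delta}^{\pi(X)}}$ for $i>m$.
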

     
     Before going through the proof, recall that the cone $C_5(X,0)$, constructed by taking limits of bi-secants to $X$ at $0$, is an algebraic cone 
     defined by H. Whitney in \cite{Whi65}. A linear projection $\pi: (\C^n,0) \to (\C^m,0)$ with kernel $D$ is called $C_5$-general (or generic) 
     with respect to $(X,0)$ if it is transversal to the cone $C_5(X,0)$, meaning $D \bigcap C_5(X,0) = \{ 0\}$.  When $\pi$ is generic, it induces a 
     homeomorphism between $(X,0)$ and its image $(\pi(X),0)$, and these two germs have the same multiplicity, for a detailed explanation see \cite[Section 8.4]{GiNoTe20}.
      
     \begin{proof}(of proposition \ref{ProyGen}) \\ 
     
        After a linear change of coordinates we can assume that the linear projection $\pi:(\C^n,0) \to (\C^m,0)$ is the projection on the first $m$ coordinates
        $(z_1,\ldots,z_n) \mapsto (z_1,\ldots,z_m)$. Let $J \subset \mathcal{O}_{X \times X, (0,0)}$ denote the ideal defining the diagonal of $X \times X$ 
        \[ J= \left< z_1-w_1, \ldots, z_n-w_n\right>\mathcal{O}_{X \times X, (0,0)} \]
        Denote $J_\pi=\left<z_1-w_1,\ldots,z_m-w_m\right>\mathcal{O}_{X \times X, (0,0)}$. Then by \cite[Proposition 8.5.11]{GiNoTe20} the genericity of $\pi$ 
        is equivalent to the equality of integral closures $\overline{J}=\overline{J_\pi}$ in $\mathcal{O}_{X \times X, (0,0)}$.
        
       On the other hand, since $\pi:(X,0) \to (\pi(X),0)$ is a finite, generically 1-1 and surjective map, then the morphism
        \[\pi^*:\mathcal{O}_{\pi(X),0} \to \Oxo\]
        is injective, makes $\Oxo$ a finitely generated $\mathcal{O}_{\pi(X),0}$-module and  induces an isomorphism of the corresponding field of fractions
        $Q(\Oxo)$. All these together imply that $\Oxo$ and $\mathcal{O}_{\pi(X),0}$ have isomorphic integral closures in  $Q(\Oxo)$
        and so if $\eta:(\overline{X},0) \to (X,0)$ denotes the normalization of $(X,0)$ then the composition 
       \[ (\overline{X},0) \stackrel{\eta}{\longrightarrow} (X,0) \stackrel{\pi}{\longrightarrow} (\pi(X),0) \]
       is a normalization of $(\pi(X),0)$.
       
       Note that the ideal $I_\Delta$ of definition \ref{DefSat} is defined by the ``coordinate functions" of the normalization map
          \[  (\overline{X},0)  \stackrel{\eta}{\longrightarrow} (X,0)  \stackrel{\pi}{\longrightarrow} (\pi(X),0)\]
                       \[\underline{y} \mapsto \left( \eta_1(y), \ldots, \eta_n(y) \right)  \mapsto \left( \eta_1(y), \ldots, \eta_m(y) \right), \]      
      that is (see \cite[Section 8.5.2]{GiNoTe20}),
      \[ I_{\Delta_X} = \left< \eta_1(y) -\eta_1(x), \ldots, \eta_n(y)-\eta_n(x)\right>\mathcal{O}_{\overline{X} \times \overline{X}, (0,0)}   \]
        \[ I_{\Delta_{\pi(X)}} = \left< \eta_1(y) -\eta_1(x), \ldots, \eta_m(y)-\eta_m(x)\right>\mathcal{O}_{\overline{X} \times \overline{X}, (0,0)}.   \] 
To prove the desired result it is enough to prove the equality of the integral closures $\overline{I_{\Delta_X} } = \overline{I_{\Delta_{\pi(X)}} }$.
       But the germ map:
            \[ \eta \times \eta:\left( \overline{X} \times \overline{X}, (0,0)\right) \longrightarrow \left( X \times X, (0,0) \right) \]
       induces a morphism of analytic algebras:      
      \[ (\eta \times \eta)^* : \mathcal{O}_{X \times X, (0,0)} \longrightarrow \mathcal{O}_{\overline{X} \times \overline{X}, (0,0)} \]
      such that:
      \[ \left<  \left( \eta \times \eta\right)^*(J) \right> = I_{\Delta_X}\]
      \[ \left<  \left( \eta \times \eta\right)^*(J_\pi) \right> = I_{\Delta_{\pi(X)}} \]
       and since $\overline{J}=\overline{J_\pi}$ in $\mathcal{O}_{X \times X, (0,0)}$ the result follows. 
           \end{proof}
           
Note that in the course of the proof we have shown that the germ $(X,0)$ and its general projection $(\pi(X),0)$ have \emph{isomorphic normalizations}. On the downside, it will no longer be true in general that bi-Lipschitz equivalent germs will have isomorphic Lipschitz saturations. This is because a germ $(X,0)$ and its Lipschitz saturation $(X^s,0)$ always have the same multiplicity, however in \cite{BFSV20} the authors prove that in dimension bigger than two, multiplicity of singularities is not a bi-Lipschitz invariant. 
     
\section{Toric singularities} \label{ToricSing}

In this section we establish what we mean by a toric singularity. We also introduce the notation we use regarding toric varieties.

Let $\A=\{\ga_1,\ldots,\ga_n\}\subset\Z^d$, $\Ga=\N\A=\big\{\sum_i a_i\ga_i|a_i\in\N\big\}$, and $\sd=\R_{\geq0}\A$. Assume that the group generated by $\A$ is $\Z^d$ and that $\sd$ is a strongly convex cone. Consider the following homomorphism of semigroups, 
\begin{align}
\pi:&\N^n\to\Ga\notag\\
&\alpha\mapsto \sum_i \alpha_i\ga_i,\notag
\end{align}
and the induced $\C-$algebra homomorphism,
\begin{align}
\vp:&\pol\to\C[t_1^{\pm},\ldots,t_d^{\pm}]\notag\\
&\hspace{1.5cm}z_i \mapsto t^{\ga_i}=t_1^{\ga_{i,1}}\cdots t_d^{\ga_{i,d}}\notag\\
&\hspace{1.5cm}z^{\alpha} \mapsto t^{\pi(\alpha)}.\notag
\end{align}
Let $\ig=\ker\vp$. Recall that $\ig$ is a prime ideal and $\ig=\langle z^{\alpha}-z^{\beta}|\pi(\alpha)=\pi(\beta)\rangle.$
Let $X\subset\C^n$ be the affine variety defined by $\ig$. Then $X$ is a $d$-dimensional affine toric variety containing the origin. Let $\C[X]$ be the ring of regular functions on $X$ and $\pg$ the $\C$-algebra of the semigroup $\Ga$. Recall that $\C[X]\cong\pg=\pgg\cong\pol/\ig$ \cite[Chapter 1]{CLS11}.

 Next we discuss some basic results on the passage from the algebraic toric variety $X$ to the germ of analytic space $\Xo$. Throughout this section we use the following notation.

\begin{itemize}
\item $\fsg$ denotes the ring of formal power series with exponents in $\Ga$.  
\item $\csg$ denotes the subring of $\fsg$ consisting of convergent series in a neighborhood of $0\in X$. 
\item $\Oxo$ denotes the algebra of germs of holomorphic functions on $0\in X$.
\end{itemize}

\begin{rem}
Notice that $\fsg$ is indeed a ring since $\Ga$ is contained in a strongly convex cone which implies that every element of $\Ga$ can be written as a sum of elements of $\Ga$ in finitely many different ways.
\end{rem}

\begin{lem}\label{lema P}
With the previous notation, 
$$\Oxo\cong\csg\cong\cs/\ig\cs.$$
\end{lem}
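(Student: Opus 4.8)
The plan is to establish the two isomorphisms by interpreting all three rings as completions/analytifications of the coordinate ring $\pg\cong\pol/\ig$ at the maximal ideal $\m$ corresponding to the origin. First I would recall that the origin $0\in X$ corresponds to the semigroup algebra's graded maximal ideal $\m=\langle t^{\ga}\mid\ga\in\Ga\setminus\{0\}\rangle$, and that $\Oxo$ is by definition the analytic local ring $(\pol/\ig)^{\mathrm{an}}_{\m}$, which is the analytic tensor product / Henselization realized concretely as $\cs/\ig\cs$ since $\ig$ is generated by polynomials and $\cs$ is the analytic local ring of $(\C^n,0)$; this gives the right-hand isomorphism $\Oxo\cong\cs/\ig\cs$ essentially by the definition of the analytic algebra of an algebraic germ (flatness of $\cs$ over $\pol_{\langle z_1,\ldots,z_n\rangle}$ makes the passage to the quotient unproblematic). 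The content of the lemma is the middle isomorphism $\cs/\ig\cs\cong\csg$.

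The key step is to produce a map $\cs\to\csg$ and identify its kernel. The algebra homomorphism $\vp:\pol\to\C[t_1^{\pm},\ldots,t_d^{\pm}]$ has image $\pg$, and sending $z_i\mapsto t^{\ga_i}$ extends to a continuous $\C$-algebra homomorphism $\hat\vp:\fs\to\fsg$: given a power series $\sum_\alpha c_\alpha z^\alpha$, its image is $\sum_\alpha c_\alpha t^{\pi(\alpha)}$, and the remark preceding the lemma — each element of $\Ga$ has only finitely many representations as a sum from $\Ga$, because $\Ga$ lies in a strongly convex cone — guarantees that each coefficient of the image is a finite sum, so $\hat\vp$ is well-defined. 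One checks $\hat\vp$ is surjective onto $\fsg$ (each generator $t^\ga$ with $\ga=\sum\alpha_i\ga_i$ is hit by $z^\alpha$) and restricts to a surjection $\cs\twoheadrightarrow\csg$ by a straightforward estimate on radii of convergence: a monomial substitution $z_i\mapsto t^{\ga_i}$ sends a series convergent near $0\in\C^n$ to one convergent near $0$, and conversely a convergent $\Ga$-series lifts to a convergent series in the $z_i$. It then remains to show $\ker(\hat\vp|_{\cs})=\ig\cs$. The inclusion $\ig\cs\subseteq\ker$ is clear since $\ig\subseteq\ker\vp$. For the reverse inclusion I would use that $\ig=\langle z^\alpha-z^\beta\mid\pi(\alpha)=\pi(\beta)\rangle$ together with the fact that $\cs$ is Noetherian and faithfully flat over $\pol_{\langle z\rangle}$ (or over $\fs$ one can argue directly): since $\ker(\hat\vp)\cap\pol=\ig$ and $\hat\vp$ is obtained from $\vp$ by base change along the flat map $\pol_{\langle z\rangle}\to\cs$, we get $\ker(\hat\vp|_{\cs})=\ig\cs$. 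Concretely, I would pick a finite Gröbner/generating set $\{f_1,\ldots,f_r\}$ of $\ig$ and show any $g\in\cs$ with $\hat\vp(g)=0$ can be written $g=\sum h_jf_j$ with $h_j\in\cs$ by lifting a division-with-remainder identity from $\fs$ and using that the remainder must lie in $\ker\hat\vp$ and be "reduced", hence zero.

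The main obstacle I anticipate is the convergence bookkeeping in the surjectivity of $\cs\to\csg$ and in showing the kernel is generated by $\ig$ within the \emph{convergent} (not merely formal) category: one must control radii of convergence under the monomial map $t\mapsto(t^{\ga_1},\ldots,t^{\ga_n})$, noting this may involve Laurent monomials since $\ga_i\in\Z^d$ rather than $\N^d$, so strong convexity of $\sd$ is essential to keep everything in a genuine power-series ring $\fsg$ and to ensure the image series converge on a common polydisc. Once the formal statement $\fs/\ig\fs\cong\fsg$ is in hand (which is the combinatorial heart, following from the remark), the convergent version follows by the standard comparison: $\cs$ is Noetherian and excellent, $\ig\cs$ is its own completion's contraction, and a formal solution with algebraic ($\ig$-generated) data can be taken convergent. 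I would cite \cite[Chapter 1]{CLS11} and \cite{GP00b} for the algebraic input and otherwise keep the argument self-contained as indicated.
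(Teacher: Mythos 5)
Your proposal is correct and follows essentially the same route as the paper: both reduce to the monomial substitution map $\cs\to\csg$, use strong convexity of $\sd$ to make $\fsg$ a ring and the map well defined, identify the image as $\csg$, and identify the kernel as $\ig\cs$ via the formal isomorphism $\fs/\ig\fs\cong\fsg$ together with $\cs\cap\ig\fs=\ig\cs$ (flatness). The only organizational difference is that you take $\Oxo\cong\cs/\ig\cs$ as the definitional starting point, whereas the paper cites Gonz\'alez P\'erez for $\Oxo\cong\csg$ and obtains the other isomorphism by completing the exact sequence $0\to\ig\to\pol\to\pg\to0$; this is immaterial, though note that your ``by definition'' step tacitly uses that $\ig\cs$ is the full (radical) ideal of the analytic germ, a fact that in your setup is only recovered a posteriori from the injection $\cs/\ig\cs\hookrightarrow\fsg$.
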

\begin{proof}
The first isomorphism is proved in \cite[Lemme 1]{GP00b} or  \cite[Lemma 1.1]{GP00}. The proof given there is for normal toric varieties. However, the same proof holds in the non-normal case.

We prove the second isomorphism. Consider the following exact sequence:
$$\xymatrix{0\ar[r]&\ig\ar[r]&\pol\ar[r]^{\vp}&\pg\ar[r]&0}.$$
Let $\m=\langle z_1,\ldots,z_n \rangle$. Taking completions with respect to $\m$ we obtain the following exact sequence:
$$\xymatrix{0\ar[r]&\hat{\ig}\ar[r]&\fs\ar[r]^{\vp_f}&\fsg\ar[r]&0}.$$
On the other hand, $\hat{\ig}\cong \ig\fs$ \cite[Proposition 10.15]{AM}. Denote $\vp_a=\vp_f|_{\cs}$. In the proof of \cite[Lemma 1.1]{GP00},  alternatively \cite[Lemme 1]{GP00b} it is shown that $\mbox{Im }\vp_a=\csg$. It remains to prove that $\ker\vp_a=\ig\cs$.

Since $\ker \vp_f=\ig\fs$, it follows that $\ig\cs\subset\ker\vp_a$. Let $F\in\cs$ be such that $0=\vp_a(F)=\vp_f(F)$. We conclude that $F\in\cs\cap\ig\fs=\ig\cs$ (these ideals are equal by \cite[Exercise 8.1.5]{dJP}).
\end{proof}

\begin{lem}\label{normaliz}
Let $\bar{X}$ be the (algebraic) normalization of $X$. Then $(\bar{X},0)$ is the (analytic) normalization of $\Xo$. Moreover, $\Onxo\cong\C\{\sd\cap\Z^d\}.$
\end{lem}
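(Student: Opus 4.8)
The statement has two parts: first, that the algebraic normalization $\bar X$ of $X$, taken as a germ at $0$, is the analytic normalization of $\Xo$; second, the explicit description $\Onxo \cong \C\{\sd\cap\Z^d\}$. The plan is to reduce everything to the isomorphism $\Oxo \cong \csg$ from Lemma \ref{lema P} together with the well-known algebraic fact that the normalization of the affine toric variety $X$ with semigroup $\Ga$ is the affine toric variety $\bar X$ with semigroup the saturation $\bar\Ga := \sd\cap\Z^d$ (see \cite[Chapter 1]{CLS11}); note $\bar\Ga$ is finitely generated by Gordan's lemma, and since $\sd$ is strongly convex, $\bar X$ again contains the distinguished point which we call $0$. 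I would first record that the inclusion of semigroups $\Ga \hookrightarrow \bar\Ga$ induces the algebraic normalization map $X_{\bar\Ga} = \bar X \to X$, a finite birational morphism.

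Next I would apply Lemma \ref{lema P} verbatim to the toric variety $\bar X$ and its semigroup $\bar\Ga$: since that lemma makes no normality hypothesis, it gives $\Onxo \cong \C\{\bar\Ga\} = \C\{\sd\cap\Z^d\}$, which is exactly the "moreover" clause. So the second assertion is essentially immediate once the first is in place, and in fact the computation of $\Onxo$ does not even require knowing a priori that $(\bar X,0)$ is the analytic normalization — it only requires that $\bar X$ be the toric variety of $\bar\Ga$.

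For the first assertion I would argue as follows. The ring $\C\{\bar\Ga\}$ is module-finite over $\C\{\Ga\} \cong \Oxo$: it is generated as a $\C\{\Ga\}$-module by the finitely many monomials $t^{m}$ with $m$ ranging over a finite set of $\bar\Ga$-generators, and convergence is preserved because each such $t^m$ is integral over $\C\{\Ga\}$ (some multiple $N m$ lies in $\Ga$, giving $t^{Nm}\in\C\{\Ga\}$, hence a monic equation $(t^m)^N - t^{Nm} = 0$). Thus the induced germ map $(\bar X,0)\to\Xo$ is finite; it is generically one-to-one (an isomorphism over the torus), and $\bar X$ is normal because $\bar\Ga$ is saturated and $\C\{\sd\cap\Z^d\}$ is the analytic local ring of a normal variety — alternatively, $\C\{\bar\Ga\}$ is integrally closed in its fraction field since $\bar\Ga = \mathbb Z\Ga \cap \sd$ is saturated, which one checks directly on monomials. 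A finite, generically injective morphism from a normal germ onto $\Xo$ is the analytic normalization, by uniqueness of the normalization. One must also confirm that $\C\{\bar\Ga\}$ and $\Oxo$ have the same total ring of fractions, which follows since $\mathbb Z\Ga = \mathbb Z\bar\Ga = \Z^d$ by hypothesis, so both fraction fields equal $\C(t_1,\ldots,t_d)$ (using that $\Xo$ is irreducible, the normalization lives inside this field).

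The main obstacle I anticipate is the verification that analytic normalization commutes with the passage from the algebraic to the analytic/germ setting — i.e.\ that the analytic normalization of $\Xo$ is obtained by germifying the algebraic normalization rather than being something larger. The clean way to handle this is to invoke the uniqueness and the defining universal property of the analytic normalization: any finite morphism from a normal complex-analytic germ that is an isomorphism over a dense open set \emph{is} the normalization. Since we have exhibited such a morphism $(\bar X,0)\to\Xo$ with $(\bar X,0)$ normal (its local ring $\C\{\sd\cap\Z^d\}$ being integrally closed), uniqueness finishes the argument. The only genuine computation is checking that $\C\{\sd\cap\Z^d\}$ is integrally closed, and that is the standard monomial argument: an element of the fraction field integral over it has exponents that must lie in $\sd\cap\Z^d$ by convexity of $\sd$, so it already belongs to the ring.
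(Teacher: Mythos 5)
Your proposal is correct and follows essentially the same route as the paper: identify $\bar{X}$ as the toric variety of the saturated semigroup $\sd\cap\Z^d$, show the induced germ map $(\bar{X},0)\to\Xo$ is finite and generically one-to-one, use normality of the analytic germ $(\bar{X},0)$ together with uniqueness of the normalization, and then apply Lemma \ref{lema P} to $\sd\cap\Z^d$ for the ``moreover'' clause. Two small remarks: the paper secures finiteness of the germ map by checking $\eta^{-1}(0)=\{0\}$ via the semigroup-homomorphism description of points (where you argue module-finiteness of $\C\{\sd\cap\Z^d\}$ over $\C\{\Ga\}$, an acceptable substitute), and for analytic normality you should rely on your first justification --- algebraic normality of $\bar{X}$ implies normality of the germ (the paper cites Kuhlmann) --- since the ``direct monomial check'' of integral closedness does not literally apply to quotients of convergent series.
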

\begin{proof}
Let $\eta:\bar{X}\to X$ be the normalization. Recall that $\bar{X}$ is the toric variety defined by the semigroup $\sd\cap\Z^d$  and that $\eta$ is induced by the inclusion of semigroups $\Ga\subset\sd\cap\Z^d$ \cite[Proposition 1.3.8]{CLS11}. In particular, $\eta$ is a toric morphism. 

Being the normalization, $\eta$ is an isomorphism on dense open sets. In addition, $\eta^{-1}(0)=0$. Indeed, let $q\in\bar{X}$ be such that $\eta(q)=0$. Recall that points in toric varieties correspond to homomorphisms of semigroups. Hence, the homomorphism corresponding to $\eta(q)$ sends every non-zero element of $\Ga$ to 0. On the other hand, for every $m\in\sd\cap\Z^d$ there is $k\geq1$ such that $km\in\Ga$. Since $\eta$ is induced by the inclusion $\Ga\subset\sd\cap\Z^d$, it follows that $q=0$.


By the previous paragraph, the induced germ of an analytic function $\eta:(\bar{X},0)\to \Xo$ is finite and generically 1-1. On the other hand, it is known that $\bar{X}$ normal implies that $(\bar{X},0)$ normal \cite[Satz 4]{K}. By the uniqueness of normalization we conclude that $\eta:(\bar{X},0)\to \Xo$ is the normalization of $\Xo$.

Finally, $\Onxo\cong\C\{\sd\cap\Z^d\}$ follows using lemma \ref{lema P}.
\end{proof}

\begin{coro}
$\Xo$ is irreducible as a germ.
\end{coro}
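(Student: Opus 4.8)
The plan is to reduce the statement to the purely algebraic claim that the analytic local ring $\Oxo$ is an integral domain: a reduced germ of analytic space is irreducible exactly when its local ring has a single minimal prime, and this certainly holds once $\Oxo$ is a domain. So the whole task is to show $\Oxo$ has no zero divisors. For this I would invoke Lemma~\ref{lema P}, which gives $\Oxo\cong\csg$, together with the obvious fact that $\csg$ is a subring of $\fsg$. Thus it is enough to prove that $\fsg$ is a domain.

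To prove that $\fsg$ is a domain I would use the standing hypotheses on $\A$. Since $\A$ generates $\Z^d$ as a group, the cone $\sd=\R_{\geq0}\A$ is full-dimensional, and since it is strongly convex its dual cone $\sd^{\vee}$ is full-dimensional as well; hence I can choose a linear form $\ell\in\Z^d$ in the interior of $\sd^{\vee}$. Then $\langle\ell,m\rangle$ is a strictly positive integer for every $m\in(\sd\cap\Z^d)\setminus\{0\}$, in particular for every $m\in\Ga\setminus\{0\}$, and each cross-section $\{m\in\sd:\langle\ell,m\rangle\le c\}$ is a compact polytope, so the sets $\{m\in\Ga:\langle\ell,m\rangle\le c\}$ are finite. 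Refining the partial preorder given by $\langle\ell,\cdot\rangle$ by, say, the lexicographic order on the finitely many monomials of each level, one gets a total order $\prec$ on $\Ga$ that is compatible with addition and well-founded. Then the usual leading-term argument finishes it: for nonzero $f=\sum a_m t^m$ and $g=\sum b_m t^m$ in $\fsg$, writing $m_f=\min_{\prec}\operatorname{supp}f$ and $m_g=\min_{\prec}\operatorname{supp}g$, compatibility of $\prec$ with addition forces the coefficient of $t^{m_f+m_g}$ in $fg$ to be exactly $a_{m_f}b_{m_g}\neq0$, so $fg\neq0$. Consequently $\csg$, and hence $\Oxo$, is a domain, and $\Xo$ is irreducible.

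The only genuinely technical point is this passage from strong convexity to a compatible well-order on $\Ga$ (equivalently, to a form $\ell$ with finite sublevel sets); once that is in place the rest is the textbook fact that a power series algebra over a well-ordered cancellative monoid has no zero divisors. An alternative route, which sidesteps this, is to argue from Lemma~\ref{normaliz}: since $\Ga\subseteq\sd\cap\Z^d$, Lemma~\ref{lema P} realizes $\Oxo\cong\csg$ as a subring of $\C\{\sd\cap\Z^d\}\cong\Onxo$, and $\Onxo$ is a domain because it is a normal Noetherian local ring; a subring of a domain is a domain. Either way $\Oxo$ is a domain, which yields the corollary.
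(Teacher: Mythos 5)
Your argument is correct, but your primary route is genuinely different from the paper's. The paper never touches $\fsg$ directly: it first shows that the germ of the \emph{normalization} $(\bar X,0)$ is irreducible, by invoking Zariski's theorem that at a normal point of an irreducible variety the \emph{completion} of the local ring is a domain, and then transfers irreducibility back to $\Xo$ along the inclusion $\Oxo\hookrightarrow\Onxo$. Your alternative route in the last paragraph is essentially this same argument, with Zariski's theorem replaced by the (equally valid, and arguably more economical) observation that $\Onxo$ is a normal Noetherian local ring, hence a domain --- note that the normality of $\Onxo$ is available at this point from lemma \ref{normaliz} via Kuhlmann's theorem. Your main route, by contrast, is a self-contained combinatorial proof that $\fsg$ itself is a domain: strong convexity of $\sd$ yields a linear form $\ell$ in the interior of $\sd^{\vee}$, hence a well-founded total order on $\Ga$ strictly compatible with addition, and the leading-term argument does the rest. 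This buys elementarity and independence from the normalization machinery (it uses only lemma \ref{lema P} and the standing hypotheses on $\A$), at the cost of having to set up the compatible well-order carefully; the paper's route is shorter given that lemma \ref{normaliz} has already been proved, but it leans on a nontrivial external theorem. Both proofs are complete; the only point you should make explicit in the leading-term step is that strict compatibility of $\prec$ with addition is what forces every pair $(m,m')\neq(m_f,m_g)$ in the supports with $m+m'=m_f+m_g$ to be impossible, so that the coefficient of $t^{m_f+m_g}$ really is the single product $a_{m_f}b_{m_g}$.
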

\begin{proof}
By a well-known theorem of Zariski \cite{Z48}, $\bar{X}$ irreducible and normal at $0$ implies that the completion of the local ring at 0 is an integral domain. Hence, $(\bar{X},0)$ is irreducible by lemma \ref{lema P}. Hence, the germ $(X,0)$ must also be irreducible.
\end{proof}

\begin{defi}
Let $\Xo$ be a germ of an analytic space. We say that $\Xo$ is a germ of a toric singularity if there exists a finitely generated semigroup $\Ga\subset\Z^d$ contained in a strongly convex cone such that $\Oxo\cong\csg$. Equivalently, let $\xg$ be the affine toric variety defined by $\Ga$. Then $\Xo$ is a toric singularity if it is isomorphic, as germs, to $(\xg,0)$.
\end{defi}

\begin{exam}
Let $X\subset\C^n$ be a toric variety containing the origin. Then $\Xo$ is a germ of a toric singularity by lemma \ref{lema P}.
\end{exam}

\section{Lipschitz saturation of toric singularities} \label{SatofTor}

      We have seen that for a toric singularity the analytic algebra $\Oxo$ is generated by monomials. With this in mind, the main idea to prove  that the Lipschitz saturation of a toric singularity is again toric, consists of showing that every monomial of an element of the saturation also belongs to the saturation. The first step towards that goal is the study of the integral closure of homogeneous ideals in the ring of power series.

   It is well known that the integral closure of a homogeneous ideal in a polynomial ring is again a homogeneous ideal (\cite[Prop. (f), pg 38]{Vas05}). However
   we need this to be true for ideals generated by homogeneous polynomials in power series rings. That is the content of the following proposition, whose proof
   was kindly communicated to us by Professors Irena Swanson and Craig Huneke. 
   
   Note that  an ideal $\mathcal{I}\subset \C\{z_1,\ldots,z_n\}$ is generated by homogeneous polynomials if and only if $f \in \I$ implies that 
   every homogeneous component of $f$ is also in $\I$.

\begin{pro}\label{CerraduraEnteraHomogenea}
    Let $\mathcal{I}\subset \C\{z_1,\ldots,z_n\}$ be an ideal generated by homogeneous polynomials in the ring of convergent power series. 
  Then its integral closure $\overline{\mathcal{I}}$ is also generated by homogeneous polynomials.     
\end{pro}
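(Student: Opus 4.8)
The plan is to exploit the scaling $\C^{*}$-action on $R=\cs$. For $\lambda\in\C^{*}$ let $\phi_{\lambda}\colon R\to R$ be the $\C$-algebra homomorphism determined by $z_{i}\mapsto\lambda z_{i}$; it is a well-defined automorphism of the germ ring, since rescaling a convergent power series only divides its polyradius of convergence by $|\lambda|$, and $\phi_{1/\lambda}$ is its inverse. First I would record two invariance facts. Because $\I$ is generated by homogeneous polynomials and $\phi_{\lambda}$ carries a homogeneous polynomial of degree $d$ to $\lambda^{d}$ times itself, we have $\phi_{\lambda}(\I)=\I$ for every $\lambda$. Applying $\phi_{\lambda}$ to an integral dependence relation $r^{m}+a_{1}r^{m-1}+\cdots+a_{m}=0$ with $a_{j}\in\I^{j}$ produces a relation of the same shape for $\phi_{\lambda}(r)$ with coefficients in $\phi_{\lambda}(\I)^{j}=\I^{j}$; hence $\phi_{\lambda}(\overline{\I})=\overline{\phi_{\lambda}(\I)}=\overline{\I}$ as well.

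Next, take $f\in\overline{\I}$ and write $f=\sum_{d\geq0}f_{d}$, where $f_{d}$ is its homogeneous component of degree $d$ (an honest homogeneous polynomial). By the equivalence recalled just before the statement, it suffices to prove $f_{m}\in\overline{\I}$ for every $m$. Here I expect the only genuine difficulty: $f$ has infinitely many homogeneous components, so no finite $\C$-linear combination of the elements $\phi_{\lambda}(f)\in\overline{\I}$ can isolate a single $f_{m}$. The remedy is to combine a finite Vandermonde computation with $\m$-adic approximation. Fix $k\geq m$, choose pairwise distinct $\lambda_{0},\dots,\lambda_{k}\in\C^{*}$, and pick $c_{0},\dots,c_{k}\in\C$ solving the invertible Vandermonde system $\sum_{j=0}^{k}c_{j}\lambda_{j}^{\,i}=\delta_{i,m}$ for $0\leq i\leq k$. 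Then $\sum_{j=0}^{k}c_{j}\,\phi_{\lambda_{j}}(f)=f_{m}+\sum_{d>k}\mu_{d}f_{d}$ with $\mu_{d}=\sum_{j}c_{j}\lambda_{j}^{\,d}\in\C$, and the tail $\sum_{d>k}\mu_{d}f_{d}$ is a power series all of whose terms have degree $>k$, hence lies in $\m^{k+1}$. Since the left-hand side is a $\C$-linear combination of elements of the ideal $\overline{\I}$, it lies in $\overline{\I}$; therefore $f_{m}\in\overline{\I}+\m^{k+1}$.

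Since this holds for all $k\geq m$, we get $f_{m}\in\bigcap_{k\geq0}\big(\overline{\I}+\m^{k}\big)$. To conclude I would invoke Krull's intersection theorem: $R$ is Noetherian and local, hence so is $R/\overline{\I}$, so $\bigcap_{k}\m^{k}(R/\overline{\I})=0$; equivalently $\overline{\I}$ is $\m$-adically closed and $\bigcap_{k}(\overline{\I}+\m^{k})=\overline{\I}$. Thus $f_{m}\in\overline{\I}$ for every $m$, and by the recalled equivalence $\overline{\I}$ is generated by homogeneous polynomials. Apart from the Vandermonde/$\m$-adic interplay just described, everything else — well-definedness of $\phi_{\lambda}$, stability of $\I$ and $\overline{\I}$ under algebra automorphisms, and the $\m$-adic closedness of ideals in a Noetherian local ring — is routine; one could alternatively deduce the result from the known polynomial-ring case using the flat inclusions $\C[z_{1},\dots,z_{n}]_{(\m)}\hookrightarrow\cs\hookrightarrow\fs$ together with persistence of integral closure under flat base change, but the direct scaling argument is shorter and self-contained.
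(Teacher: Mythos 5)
Your proof is correct, and it takes a genuinely different route from the paper's. The paper first treats the case where $\I$ contains a power of $\m$: it splits $s=p+s'$ with $p$ a polynomial and $s'\in\m^k\subset\I$, descends to the localized polynomial ring $A_{\m}$ via faithful flatness of $A_{\m}\to\cs$ to conclude that $p$ lies in the integral closure of the corresponding polynomial ideal, and invokes the known homogeneity of integral closures of homogeneous ideals in polynomial rings; the general case is then deduced from the approximation $\bigcap_j\overline{\I+\m^j}=\overline{\I}$ of Huneke--Swanson, which is a nontrivial integral-closure-theoretic statement. You instead use the scaling automorphisms $\phi_{\lambda}$, which fix $\I$ and hence $\overline{\I}$, isolate each homogeneous component $f_m$ modulo $\m^{k+1}$ by an explicit Vandermonde combination of the elements $\phi_{\lambda_j}(f)\in\overline{\I}$ (the tail of degree $>k$ is automatically convergent, being the difference of $\sum_j c_j\phi_{\lambda_j}(f)$ and the polynomial $f_m$), and finish with the ordinary Krull intersection $\bigcap_k\bigl(\overline{\I}+\m^k\bigr)=\overline{\I}$. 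Your argument is more elementary and self-contained: it needs neither the polynomial-ring result nor the Huneke--Swanson approximation theorem, only that $\overline{\I}$ is an ideal stable under automorphisms preserving $\I$ and that ideals in a Noetherian local ring are $\m$-adically closed. It is also very much in the spirit of the paper, since the authors deploy exactly this torus-scaling-plus-general-position device in the proof of lemma \ref{Monomial} to extract individual monomials from a homogeneous element of the saturation; your observation shows the same mechanism already proves proposition \ref{CerraduraEnteraHomogenea} itself.
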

\begin{proof}
   Let $R:=\C\{z_1,\ldots,z_n\}$ denote the ring of convergent power series in $n$-variables over the field of complex numbers with maximal 
   ideal $\m=\left<z_1,\ldots,z_n\right>$, and let $\I=\left<f_1,\ldots,f_m\right>R$ where each $f_j \in A:=\C[z_1,\ldots,z_n] \subset R$ is a homogeneous polynomial. \\
   
    Let $K=\left<f_1,\ldots, f_m\right>A$  be the ideal generated by the $f_j$'s in the polynomial ring $A$, so $\I=\left<K\right>R$. Suppose that $\I$ contains a power of $\m$, i.e. there exists $k\geq1$ such that $\m^k \subset \I$, and let $s \in R$ be in the integral closure of $\I$. We can write 
  \[s = p + s',\]
  where $p\in A$ is a polynomial and $s' \in \m^k \subset \I \subset \overline{\I}$ is a series of order greater than or equal to $k$. Note that $p=s-s' \in \overline{\I}$.
  
  Since the ring extension $A_\m \rightarrow R$ is faithfully flat (\cite[Lemma B.3.4]{GLS07}) then $\overline{\I} \cap A_\m= \overline{\left<K\right>A_\m}$ (\cite[Prop.1.6.2]{HS06}), in particular $p/1 \in \overline{\left<K\right>A_\m}$. This means that in the localized polynomial ring $A_\m$ we have an equation of integral 
  dependence of the form:
  \[ \frac{p^r}{1}+ \frac{b_1}{c_1} \frac{p^{r-1}}{1} + \cdots + \frac{b_{r-1}}{c_{r-1}}\frac{p}{1} + \frac{b_r}{c_r} = \frac{0}{1},\]
 where $b_j \in K^j$ and $c_j \notin \m$. Letting $u=c_1\cdots c_r \in A$ and multiplying this equation by the unit $\frac{u^r}{1}$ of $A_\m$ we get the 
 following equality in $A_\m$:
 \[  \frac{(up)^r + \widetilde{b_1} (up)^{r-1}+ \cdots + \widetilde{b_{r-1}}(up) + \widetilde{b_r} }{1}=\frac{0}{1}.\]
Since A is an integral domain we get an integral dependence equation in $A$
 \[(up)^r + \widetilde{b_1} (up)^{r-1}+ \cdots + \widetilde{b_{r-1}}(up) + \widetilde{b_r}=0,\]
 that is, $up \in \overline{K} \subset A$. The inclusions $\m^k \subset K \subset \overline{K}$ imply that $\overline{K}$ is an $\m$-primary
 ideal of $A$ and since $u \notin \m$ then $p \in \overline{K}$. Since $K$ is a homogeneous ideal in the polynomial ring $A$ then 
 $\overline{K}$ is also homogeneous (\cite[Prop. (f), pg 38]{Vas05}) and so each homogeneous component of $p$ is also in $\overline{K} \subset \overline{\I}$. This
 implies that $\overline{\I}$ is also generated by homogeneous polynomials. 
 
   Now let $\I \subset R$ be an arbitrary ideal generated by homogeneous polynomials, and  let $s$ be in the integral closure $\overline{\I}$ as before. Let $s_0$ be the initial form of $s$, $s_0$ is the non-zero homogeneous component of $s$ of lowest degree. For all $j \in \N$, $s$ is in the integral closure of $\I + \m^j$.  We proved in the previous paragraph that the integral closure
  $\overline{\I + \m^j}$  of $\I + \m^j$ in $R$ is generated by homogeneous polynomials for all $j \in \N$.  In particular, $s_0$ is in the integral closure  $\overline{\I + \m^j}$  for all $j$. But by 
  \cite[Corollary 6.8.5]{HS06} this implies that $s_0$ is in the integral closure $\overline{\I}$. Now we start over with $s'=s-s_0 \in \overline{\I}$ and in this way we get that all homogeneous components of $s$ are in $\overline{\I}$ which is what we wanted to prove. 
\end{proof}

\begin{teo} \label{SatTor}
  Let $(X,0)$ be a $d$-dimensional toric singularity with smooth normalization. Then the Lipschitz saturated germ $(X^s,0)$ is also a toric singularity. 
\end{teo}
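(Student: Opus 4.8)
The plan is to exploit the combinatorial structure made available by the hypothesis that the normalization is smooth, so that $\Onxo \cong \C\{u_1,\dots,u_d\}$ is a regular analytic ring, and every element of $\Oxo \subset \Onxo^s \subset \Onxo$ is an honest convergent power series in the $u_i$. Concretely, since $(X,0)$ is toric, $\Oxo \cong \csg$ with $\Ga = \N\A$, and the normalization semigroup $\sd\cap\Z^d$ being isomorphic to $\N^d$ means we may choose coordinates $u_1,\dots,u_d$ on $\bar X$ so that each generator $\ga_i$ of $\Ga$ maps to a monomial $u^{a_i}$, $a_i\in\N^d$. The strategy is to show: (i) $\Oxo^s$ is a \emph{monomial} subalgebra of $\C\{u_1,\dots,u_d\}$, i.e. if $f\in\Oxo^s$ then every monomial appearing in the expansion of $f$ lies in $\Oxo^s$; and (ii) the set $\Ga^s$ of exponents of monomials in $\Oxo^s$ is a finitely generated subsemigroup of $\N^d$ contained in the strongly convex cone $\sd$. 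Once (i) and (ii) are established, $\Oxo^s \cong \C\{\Ga^s\}$ and $(X^s,0)$ is toric by definition.

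For step (i), the key is Definition \ref{DefSat}: $f\in\Oxo^s$ iff $f\widearc\otimes 1 - 1\widearc\otimes f \in \overline{I_\Delta}$ inside $\Onxo\widearc\otimes_\C\Onxo \cong \C\{u,v\}$ (the analytic tensor product of two regular rings). Under the smooth normalization hypothesis, $I_\Delta$ is exactly the ideal $\langle u^{a_1}-v^{a_1},\dots,u^{a_n}-v^{a_n}\rangle$ — but more usefully, $I_\Delta$ is generated by differences of the monomials $u^{\ga}$, $\ga\in\Ga$, so it is a binomial, and in fact \emph{$\Z^d$-homogeneous}, ideal with respect to the natural multigrading on $\C\{u,v\}$ in which $u_j$ and $v_j$ both have weight $e_j\in\Z^d$. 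I would then invoke Proposition \ref{CerraduraEnteraHomogenea} — or rather a straightforward adaptation of it to the $\N^d$-multigraded setting rather than the standard $\N$-grading — to conclude that $\overline{I_\Delta}$ is again generated by multihomogeneous elements, hence is itself a $\Z^d$-graded ideal. Decomposing $f = \sum_{\ga} c_\ga u^\ga$ into its multihomogeneous pieces, the element $f\widearc\otimes 1 - 1\widearc\otimes f$ decomposes accordingly, and since $\overline{I_\Delta}$ is graded, the piece $c_\ga(u^\ga\otimes 1 - 1\otimes u^\ga)$ lies in $\overline{I_\Delta}$ for each $\ga$; that is precisely the statement that the monomial $c_\ga u^\ga$ belongs to $\Oxo^s$. (One must be slightly careful that a single $\ga\in\N^d$ may correspond to several monomials in the $u_j$ — but if the $\ga_i$ generate $\Z^d$ this ambiguity does not arise in the relevant degrees; in any case the grading by $\Z^d$ plus the fact that $\Oxo^s\subset\C\{u_1,\dots,u_d\}$ pins down individual monomials, so I would phrase the decomposition monomial-by-monomial directly.)

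For step (ii), let $\Ga^s := \{\ga\in\N^d : u^\ga\in\Oxo^s\}$. It is clearly a subsemigroup of $\N^d$ containing $\Ga$, and since $\Oxo^s\subset\Onxo=\C\{\sd\cap\Z^d\}$ we get $\Ga^s\subset \sd\cap\Z^d \subset \sd$, so $\Ga^s$ sits in a strongly convex cone. Finite generation follows because $\Ga^s$ is a subsemigroup of $\sd\cap\Z^d$ (itself finitely generated, $\cong\N^d$) that contains the $\m_{\bar X}$-primary-type "tail": indeed $\Onxo$ is a finite $\Oxo$-module, hence a finite $\Oxo^s$-module, so $\Ga^s$ and $\sd\cap\Z^d$ agree outside a bounded region, and a subsemigroup of $\N^d$ which is cofinite in the Gordan–finitely-generated semigroup $\sd\cap\Z^d$ is itself finitely generated (a standard Gordan-type / Noetherian argument on the corresponding graded module). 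Then $\Oxo^s$, being spanned as a $\C$-vector space exactly by the monomials $\{u^\ga : \ga\in\Ga^s\}$ by step (i), equals $\C\{\Ga^s\}$, and therefore $(X^s,0)$ is the germ of the toric variety $X_{\Ga^s}$.

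The main obstacle I expect is step (i) — specifically, making the grading argument rigorous in the analytic (convergent power series) category: one must know that $\overline{I_\Delta}\subset\C\{u,v\}$ respects the $\Z^d$-multigrading induced by the torus action. This is where Proposition \ref{CerraduraEnteraHomogenea} is indispensable, but that proposition as stated treats the standard $\N$-grading by total degree; I would either re-run its proof verbatim in the $\Z^d$-graded setting (the faithfully-flat descent from $A_\m$ to $R$ and the Gordan/Rees-valuation arguments go through unchanged, since the torus acts algebraically on $A=\C[u,v]$ and $\m$ is torus-invariant), or reduce the multigraded statement to a family of single-grading statements by choosing a generic $\Z_{\geq0}$-linear functional on $\Z^d$ that is injective on the finite set of relevant exponents. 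The other delicate point, the reason for the smooth-normalization hypothesis flagged in Remark \ref{Tech}, is that without it $\Onxo\widearc\otimes_\C\Onxo$ and the ideal $I_\Delta$ are more complicated and one loses the clean identification of $\Oxo^s$ with a monomial algebra; I would simply note that smoothness is exactly what lets us work inside the polynomial-like ring $\C\{u_1,\dots,u_d\}$ where "monomial" has unambiguous meaning and the homogeneity proposition applies.
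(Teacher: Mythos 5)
Your step (i) is a legitimate variant of the paper's argument and arguably more streamlined: the paper first proves that $\overline{I_\Delta}$ is homogeneous for the standard total-degree grading (Proposition \ref{CerraduraEnteraHomogenea}) and then, in a separate lemma, uses the torus automorphisms $\varphi_\tau$ together with a Veronese/Vandermonde argument to split each homogeneous polynomial of $\Oxo^s$ into its monomials; you propose to do both steps at once by showing $\overline{I_\Delta}$ is $\Z^d$-multigraded. The adaptation you sketch does go through: the reduction to the case $\m^k\subset\I$, the faithfully flat descent to $A_\m$, and the intersection $\bigcap_j\overline{\I+\m^j}=\overline{\I}$ are grading-agnostic, and since the multidegree of a monomial of $\C\{x,y\}$ determines its total degree, each multihomogeneous component is a finite sum of monomials, so the polynomial-ring input (integral closure of a multigraded ideal is multigraded, provable exactly by the torus-action argument the paper uses) suffices. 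This part of the plan is sound, but it is the technical heart of the theorem and would need to be carried out, not just gestured at.

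Step (ii) contains a genuine error. You claim that module-finiteness of $\Onxo$ over $\Oxo^s$ forces $\Ga^s$ and $\sd\cap\Z^d=\N^d$ to ``agree outside a bounded region.'' This is false for $d\geq2$: for the Whitney umbrella one has $\Ga=\Ga^s=\N\{(1,0),(1,1),(0,2)\}$ and $\N^2\setminus\Ga^s=\{(0,2k+1)\mid k\in\N\}$ is infinite, so $\Ga^s$ is not cofinite in $\N^2$ (in dimension $\geq2$ the conductor is a monomial ideal, not a power of the maximal ideal). The conclusion, finite generation of $\Ga^s$, is still true but must be argued differently: for instance, $\C[\Ga^s]$ is a $\C[\Ga]$-submodule of the finite $\C[\Ga]$-module $\C[\N^d]$, hence a finite module and therefore a finitely generated algebra; the paper instead runs a Noetherian ascending-chain argument on the ideals $\langle u^{\beta^1},\ldots,u^{\beta^m}\rangle$ of $\Oxo^s$. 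Separately, your final sentence slides from ``every monomial with exponent in $\Ga^s$ lies in $\Oxo^s$'' to ``$\Oxo^s=\C\{\Ga^s\}$.'' The inclusion $\C\{\Ga^s\}\subset\Oxo^s$ concerns infinite convergent sums and is not formal: it requires either the truncation argument $g(x)-g(y)\in\bigcap_k\overline{I_\Delta+\m^{k+1}}=\overline{I_\Delta}$ used in the paper, or an appeal to $\Oxo^s$ being an analytic algebra closed under substitution (which in turn presupposes finite generation). Both points are repairable, but as written they are gaps.
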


We will do this proof in several steps starting with the following lemma.

\begin{lem}\label{Monomial}
  Let $(X,0)$ be a $d$-dimensional toric singularity with smooth normalization and let $f \in \overline{\Oxo}$ be a homogeneous polynomial such that $f \in \Oxo^s$. 
  Then every monomial of $f$ is also in $\Oxo^s$.
\end{lem}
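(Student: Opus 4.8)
The plan is to work with the explicit description of the Lipschitz saturation as a subalgebra of $\overline{\Oxo}\cong\C\{\si\cap\Z^d\}$ (using that the normalization is smooth, so $\si\cap\Z^d\cong\N^d$ and $\overline{\Oxo}\cong\C\{x_1,\dots,x_d\}$) and to exploit the action of the torus $(\C^*)^d$. The key observation is that the diagonal ideal $I_\Delta\subset\mathcal{O}_{\overline{X}\times\overline{X},(0,0)}\cong\C\{x_1,\dots,x_d,y_1,\dots,y_d\}$ is generated by the elements $\eta_i(y)-\eta_i(x)$, where each $\eta_i$ is the monomial $x^{\ga_i}$; since the $\ga_i$ generate $\Z^d$ and the normalization is smooth, after a change of coordinates these generators are, up to the homogeneity grading coming from the semigroup, a set of \emph{generalized monomials of fixed multidegree} — i.e. $I_\Delta$ is generated by elements that are homogeneous with respect to the $\Z^d$-grading on $\C\{x,y\}$ in which $x_j$ and $y_j$ both have degree $e_j$.

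First I would make this grading precise: declare $\deg x_j=\deg y_j=e_j\in\Z^d$, so that $x^{\ga_i}-y^{\ga_i}$ is homogeneous of degree $\ga_i$. Then $I_\Delta$ is a \emph{graded} (homogeneous) ideal for this $\Z^d$-grading, and it is generated by polynomials. By Proposition \ref{CerraduraEnteraHomogenea} — or rather its evident multigraded analogue, which follows by the same argument applied grading-piece by grading-piece, or by reducing to the $\N$-grading obtained by composing with a linear functional positive on the cone — the integral closure $\overline{I_\Delta}$ is again a homogeneous ideal for this grading. Next, for $f\in\Oxo^s$ that is itself a homogeneous polynomial (say of multidegree $\mathbf{a}$), the element $f\widearc\otimes 1-1\widearc\otimes f$ corresponds to $f(y)-f(x)\in\C\{x,y\}$, which is homogeneous of multidegree $\mathbf{a}$; it lies in $\overline{I_\Delta}$. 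If $f=\sum_{\mathbf{b}} c_{\mathbf{b}}x^{\mathbf{b}}$ is the monomial expansion, then each monomial $c_{\mathbf{b}}x^{\mathbf{b}}$ need \emph{not} have multidegree $\mathbf{a}$ — but here is where I would use a finer tool: apply the torus action. For $\lambda=(\lambda_1,\dots,\lambda_d)\in(\C^*)^d$, the substitution $x_j\mapsto \lambda_j x_j$, $y_j\mapsto\lambda_j y_j$ is an automorphism of $\C\{x,y\}$ preserving $I_\Delta$ (it sends each generator $x^{\ga_i}-y^{\ga_i}$ to $\lambda^{\ga_i}(x^{\ga_i}-y^{\ga_i})$), hence preserves $\overline{I_\Delta}$; so $f(\lambda y)-f(\lambda x)\in\overline{I_\Delta}$ for all $\lambda$. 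Writing this out, $\sum_{\mathbf b}c_{\mathbf b}\lambda^{\mathbf b}\big(y^{\mathbf b}-x^{\mathbf b}\big)\in\overline{I_\Delta}$, and since $\overline{I_\Delta}$ is a closed submodule (integral closure of an ideal in an analytic algebra is closed) and the coefficients $\lambda^{\mathbf b}$ separate the distinct exponents $\mathbf b$, a Vandermonde / finite-linear-combination argument extracts each summand: $c_{\mathbf b}\big(y^{\mathbf b}-x^{\mathbf b}\big)\in\overline{I_\Delta}$ for every $\mathbf b$. This says exactly that each monomial $c_{\mathbf b}x^{\mathbf b}$, viewed in $\overline{\Oxo}$, satisfies the defining condition of $\Oxo^s$, i.e. $c_{\mathbf b}x^{\mathbf b}\in\Oxo^s$.

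I expect the main obstacle to be two bookkeeping points rather than a deep idea. First, one must be careful that the generators $\eta_i(y)-\eta_i(x)$ of $I_\Delta$ genuinely are \emph{polynomial} and homogeneous in the chosen multigrading once we identify $\overline{\Oxo}$ with a polynomial-ring-completion via the smooth normalization: this is precisely where the smoothness hypothesis is used, since it guarantees $\overline{\Oxo}$ is a power series ring in which the $\eta_i=x^{\ga_i}$ are monomials, and this is the technical reason flagged in remark \ref{Tech}. Second, one must justify passing from the multigraded homogeneity statement of Proposition \ref{CerraduraEnteraHomogenea} (stated there for the standard $\N$-grading) to the $\Z^d$-grading; the cleanest route is to observe that, because $\si$ is strongly convex, there is a linear functional $\ell\colon\Z^d\to\Z$ strictly positive on $\Ga\setminus\{0\}$, and the $\N$-grading $\deg' x_j=\ell(e_j)$ refines nothing we need but makes $I_\Delta$ homogeneous in the sense of Proposition \ref{CerraduraEnteraHomogenea}; combined with the torus action above (which handles the full $\Z^d$-refinement) this suffices. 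Once these are in place, extracting monomials is the short Vandermonde argument, and the lemma follows.
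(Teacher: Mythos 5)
Your proposal is correct and follows essentially the same route as the paper: the torus action $x_j\mapsto\lambda_j x_j$, $y_j\mapsto\lambda_j y_j$ preserves $I_\Delta$ and hence $\overline{I_\Delta}$, so $f(\lambda u)\in\Oxo^s$ for all $\lambda\in(\C^*)^d$, and a finite Vandermonde-type extraction with generic $\lambda$'s (which the paper phrases via nondegeneracy of the Veronese image) recovers each monomial. The digression on a $\Z^d$-multigraded analogue of Proposition \ref{CerraduraEnteraHomogenea} is not needed for this lemma (and the paper does not use it here); the finite linear-combination argument alone suffices, exactly as in the paper's proof.
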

\begin{proof}
Let $\Ga\subset\Z^d$ be the semigroup defining $(X,0)$. Recall that $\sd=\R_{\geq0}\Ga$ is a strongly convex cone. This fact, together with the condition of smooth normalization allows us to assume, up to a change of coordinates, that $\Ga\subset\N^d$ and $\sd\cap\Z^d=\N^d$ (see lemma \ref{normaliz}).

Let $\mathcal{A}=\{a_1,\ldots,a_n\} \subset \N^d$ be the minimal generating set of $\Gamma$ defining the toric singularity $(X,0)$. The normalization map 
  can be realized as the monomial morphism:
   \begin{align*} n: (\C^d,0) &\longrightarrow (X,0) \\
   \left(u_1,\ldots,u_d\right) & \mapsto  \left( u^{a_1}, \ldots, u^{a_n} \right).\end{align*}
The ideal $I_\Delta$ of definition \ref{DefSat} is a homogeneous, binomial ideal in the ring of convergent power series $\C\{x_1,\ldots, x_d, y_1,\ldots,y_d\}$:
   \[I_\Delta= \left<  x^{a_1}-y^{a_1}, \ldots, x^{a_n}-y^{a_n}\right>. \]
   For any point $\tau=(t_1,\ldots,t_d) \in \left(\C^*\right)^d$ we have an automorphism 
    \[ \varphi_\tau:\C\{x_1,\ldots, x_d, y_1,\ldots,y_d\}\circlearrowleft \] defined by  $x_j \mapsto t_jx_j$ and
   $y_j \mapsto t_jy_j$,  such that $\varphi_\tau\left( I_\Delta \right)=I_\Delta$.
   
   Now let $f \in \overline{\Oxo} \cong \C\{u_1,\ldots,u_d\}$ be a homogeneous polynomial such that $f \in \Oxo^s$. Then $f(x)-f(y) \in \overline{I_\Delta}$ and it satisfies an integral 
   dependence equation of the form:
   \[ \left( f(x) - f(y)\right)^m + h_1(x,y)\left( f(x) - f(y)\right)^{m-1} + \cdots + h_m(x,y)=0,\]
   where $h_j(x,y) \in I_\Delta^j$. By applying the morphism $ \varphi_\tau$ to the previous equation we get: 
   \[ \left( f(\tau x) - f(\tau y)\right)^m + h_1(\tau x,\tau y)\left( f(\tau x) - f(\tau y)\right)^{m-1} + \cdots + h_m(\tau x,\tau y)=0, \]
    where $h_j(\tau x,\tau y) \in I_\Delta^j$, and so we get that $g_\tau:=f(t_1u_1,\ldots,t_du_d) \in \Oxo^s$. 
    
    Since $f$ is a homogeneous polynomial, say of order $k$, then we can write it in the form
    \[ f(u_1,\ldots,u_d)= \sum_{\alpha_1+\cdots+\alpha_d=k} b_{\alpha} u^{\alpha},\]
    where $b_\alpha \in \C$ and $\alpha=(\alpha_1,\ldots,\alpha_d)$. Then we have an expression for $g_\tau$ of the form
    \begin{align*} g_\tau=f(t_1u_1,\ldots,t_du_d)&=\sum_{\alpha_1+\cdots+\alpha_d=k}\tau^\alpha b_{\alpha} u^{\alpha} \\
      &= \left< \left(\tau^{\alpha^1},\ldots, \tau^{\alpha^N} \right), \left(b_{\alpha^1} u^{\alpha^1},\ldots, b_{\alpha^N} u^{\alpha^N}\right) \right>. \end{align*}
   By choosing $\tau_1, \ldots, \tau_N$ generic points in $\left(\C^*\right)^d$ we get
   \[ \begin{pmatrix} g_{\tau_1} \\ \vdots \\g_{\tau_N} \end{pmatrix} = \begin{pmatrix} 
       \tau_1^{\alpha^1} & \ldots &  \tau_1^{\alpha^N} \\  \vdots & \cdots & \vdots \\ \tau_N^{\alpha^1} & \ldots &  \tau_N^{\alpha^N} \end{pmatrix} 
       \begin{pmatrix} b_{\alpha^1} u^{\alpha^1} \\ \vdots \\ b_{\alpha^N} u^{\alpha^N} \end{pmatrix},  \]
    where the $i$-th row of the matrix  corresponds to the image of  the point $\tau_i \in \left(\C^*\right)^d $ of the Veronese map $\nu_k: \mathbb{P}^{d-1} \to \mathbb{P}^{N-1}$ 
    of degree $k$. Since the image of the Veronese map is a nondegenerate projective variety, in the sense that it is not contained in any hyperplane, then these $N$ points 
    are in general position. This implies that the matrix is invertible, and so we have 
    \[ \begin{pmatrix} b_{\alpha^1} u^{\alpha^1} \\ \vdots \\ b_{\alpha^N} u^{\alpha^N} \end{pmatrix}= 
    \begin{pmatrix} 
       \tau_1^{\alpha^1} & \ldots &  \tau_1^{\alpha^N} \\  \vdots & \cdots & \vdots \\ \tau_N^{\alpha^1} & \ldots &  \tau_N^{\alpha^N} \end{pmatrix} ^{-1}
       \begin{pmatrix} g_{\tau_1} \\ \vdots \\g_{\tau_N} \end{pmatrix} \] 
   In particular we have that $ b_{\alpha^j} u^{\alpha^j} \in \Oxo^s$ which is what we wanted to prove.         
\end{proof}

\begin{proof} (\textit{of theorem \ref{SatTor}} ) \\
   We want to prove that there exists a finitely generated semigroup $\Gamma^s \subset \N^d$ such that \[\Oxo^s \cong \C\{\Gamma^s\}.\]
   As we mentioned before, in this setting the ideal $I_\Delta$ is a homogeneous, binomial ideal in the ring of convergent power series $\C\{x_1,\ldots, x_d, y_1,\ldots,y_d\}$,
   and by proposition \ref{CerraduraEnteraHomogenea} the ideal $\overline{I_\Delta}$ is also generated by homogeneous polynomials. This means that if 
   a series $f\in \C\{u_1\ldots,u_d\}$ is in $\Oxo^s$,
   \[f= f_m + f_{m+1}+ \cdots + f_N + \cdots ,\] then every homogeneous component $f_j$ of $f$ is in $\Oxo^s$. But by lemma \ref{Monomial} this implies that every 
   monomial of degree $j$ with a non-zero coefficient in $f_j$ is also in $\Oxo^s$.  Let $\Gamma^s$ be the semigroup defined by
   \[ \Gamma^s=\{ \alpha \in \N^d \, | \, u^\alpha \in \Oxo^s \}.\]
   We have that 
   \[ \Oxo^s \subset \C\{\Gamma^s\}\subset \C\{u_1,\ldots,u_d\}.\]
   We have to prove that $\Gamma^s$ is finitely generated and the equality $\Oxo^s = \C\{\Gamma^s\}$. \\
   
   To begin with the latter, take $g\in \C\{\Gamma^s\}$. For every $k\geq 0$, write $g= g_{\leq k} + \widetilde{g_k}$
   where $g_{\leq k}$ is the truncation of $g$ to degree $k$, and since it is a finite sum of monomials of $\Oxo^s$, by definition of $\Gamma^s$,
    we have that $g_{\leq k} \in \Oxo^s$. This means that
   \[ g_{\leq k}(x) -g_{\leq k}(y) \in \overline{I_\Delta}\] 
   and since $\widetilde{g_k}(x)-\widetilde{g_k}(y) \in \m^{k+1}\C\{x,y\}$ we have that
   \[ g(x) -g(y) \in \overline{I_\Delta} + \m^{k+1} \subset \overline{I_\Delta + \m^{k+1}}.\]
   In particular,
   \[ g(x)-g(y) \in \bigcap_{k \in \N}\overline{I_\Delta + \m^{k+1}} = \overline{I_\Delta} \textrm{ by \cite[Corollary 6.8.5]{HS06}}\]
   then $g \in \Oxo^s$ and we have the equality we wanted. 

Now we prove that $\Ga^s$ is a finitely generated semigroup. Let $\leq$ be a monomial order in $\C[u_1,\ldots,u_d]$. We order the elements of $\Ga^s$ and write them as $\{\beta^j\, | \, j \in \N \}$, where  $0=\beta^0 < \beta^1 < \beta^2 <\cdots$. Consider the following ascending chain of ideals in $\Oxo^s$:
$$\langle u^{\beta^1} \rangle\subset \langle u^{\beta^1},u^{\beta^2} \rangle \subset \langle u^{\beta^1},u^{\beta^2},u^{\beta^3} \rangle \subset \cdots$$
Since $\Oxo^s$ is Noetherian, we have $u^{\beta^j}\in\langle u^{\beta^1},\ldots,u^{\beta^m} \rangle$, for some $m\in\N$ and for all $j\in\N$. We claim that $\Ga^s=\N(\beta^1,\ldots,\beta^m)$.

Indeed, let $u^{\beta^j}=\sum_{i=1}^m F_iu^{\beta^i}$, for some $F_i\in\Oxo^s$. Since $\Oxo^s\subset\C\{u_1,\ldots,u_d\}$ it follows that $u^{\beta^j}=u^{\beta}u^{\beta^i}$, for some $i$ and some monomial $u^{\beta}$ of $F_i$. As before, we have $\beta\in\Ga^s$. Hence, $\beta^j$ is the sum of $\beta^i$ plus an element $\beta\in\Ga^s$ and $\beta<\beta^j$. Continuing this way, we obtain that $\beta^j\in\N(\beta^1,\ldots,\beta^m)$.
\end{proof}

\begin{rem}\label{Tech} The smooth normalization hypothesis plays a key role in the proofs of lemma \ref{Monomial} and theorem \ref{SatTor} since it implies that  the ideal $I_\Delta$  is a homogeneous ideal in the ring of convergent power series $\C\{x_1,\ldots, x_d, y_1,\ldots,y_d\}$ and allows us to use proposition \ref{CerraduraEnteraHomogenea}. 
\end{rem} 

We now know that for a germ $(X,0)$ of toric singularity with smooth normalization and associated semigroup $\Ga$, the Lipschitz saturated germ $(X^s,0)$ is again a toric singularity with associated semigroup $\Ga^s$. In this setting we have $\Ga \subset \Ga^s \subset \N^d$ and we need to determine which elements $ \alpha \in \N^d$ we have to add to $\Ga$ in order to obtain $\Ga^s$. Since many properties of a toric variety are encoded in its semigroup, we can use them to start discerning. This is the content of the following proposition.  
  
  \begin{pro}\label{NoAgrego} Let $(X,0)$ be a germ of $d$-dimensional toric singularity with smooth normalization. Let $\Ga \subset \N^d$ be the associated 
  semigroup and $K_+(\Ga)$ the convex hull of $\Ga\setminus\{0\}$ in $\R^d$. If $ \alpha \in \N^d \setminus K_+(\Ga)$ then $\alpha \notin \Ga^s$.
  \end{pro}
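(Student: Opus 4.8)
The starting point is the explicit description of $\Ga^s$ coming out of the proof of Theorem \ref{SatTor}. We work in the coordinates used there, so that $\Ga\subset\N^d$, $\sd=\R_{\geq0}\Ga=\R^d_{\geq0}$, the normalization is $u\mapsto(u^{a_1},\ldots,u^{a_n})$ with $\{a_1,\ldots,a_n\}\subset\N^d\setminus\{0\}$ the minimal generating set of $\Ga$, and $I_\Delta=\langle x^{a_1}-y^{a_1},\ldots,x^{a_n}-y^{a_n}\rangle$ inside $\C\{x_1,\ldots,x_d,y_1,\ldots,y_d\}$; then
\[ \Ga^s=\{\,\alpha\in\N^d\ :\ x^\alpha-y^\alpha\in\overline{I_\Delta}\,\}. \]
Fix $\alpha\in\N^d\setminus K_+(\Ga)$ and assume $\alpha\neq0$. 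The plan is to exhibit a single analytic arc along which $x^\alpha-y^\alpha$ vanishes to strictly smaller order than every element of $I_\Delta$; by the very definition of integral dependence this prevents $x^\alpha-y^\alpha$ from being integral over $I_\Delta$, so $\alpha\notin\Ga^s$.

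The first step is to manufacture a good weight vector. Since $\sd=\R^d_{\geq0}$, for any $\gamma,\delta\in\Ga\setminus\{0\}$ one has $\gamma+k\delta\in\Ga\setminus\{0\}$ for all $k\in\N$, so the whole ray $\gamma+\R_{\geq0}\delta$ lies in $\mathrm{conv}(\Ga\setminus\{0\})=K_+(\Ga)$; hence the recession cone of the polyhedron $K_+(\Ga)$ is exactly $\R^d_{\geq0}$ and $K_+(\Ga)$ is upward closed, i.e. $K_+(\Ga)+\R^d_{\geq0}=K_+(\Ga)$. Consequently a hyperplane separating the point $\alpha$ from the closed convex set $K_+(\Ga)$ can be taken with normal in $\R^d_{\geq0}$, and after a small positive perturbation of this normal and clearing denominators we obtain $\ell=(\ell_1,\ldots,\ell_d)\in\N^d$ with every $\ell_j\geq1$ and $\langle\ell,\alpha\rangle<\langle\ell,\gamma\rangle$ for all $\gamma\in\Ga\setminus\{0\}$; in particular $\langle\ell,\alpha\rangle<w:=\min_{1\leq i\leq n}\langle\ell,a_i\rangle$.

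Now pick $\lambda=(\lambda_1,\ldots,\lambda_d)\in(\C^*)^d$ generic enough that $\lambda^\beta:=\prod_j\lambda_j^{\beta_j}\neq1$ for each $\beta$ in the finite set $\{a_1,\ldots,a_n,\alpha\}$ (legitimate, since each such $\beta\in\N^d$ is nonzero, so $\{\lambda^\beta=1\}$ is a proper subvariety of the torus). As $\ell_j\geq1$, the rule $x_j\mapsto s^{\ell_j}$, $y_j\mapsto\lambda_j s^{\ell_j}$ defines a morphism of analytic algebras $\phi\colon\C\{x_1,\ldots,y_d\}\to\C\{s\}$, and $\phi(x^{a_i}-y^{a_i})=(1-\lambda^{a_i})\,s^{\langle\ell,a_i\rangle}$, $\phi(x^\alpha-y^\alpha)=(1-\lambda^\alpha)\,s^{\langle\ell,\alpha\rangle}$, all leading coefficients being nonzero. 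Thus $\phi(I_\Delta)=(s^{w})$ and $\mathrm{ord}_s\,\phi(x^\alpha-y^\alpha)=\langle\ell,\alpha\rangle<w$. If $x^\alpha-y^\alpha$ were integral over $I_\Delta$, an equation $(x^\alpha-y^\alpha)^m+h_1(x^\alpha-y^\alpha)^{m-1}+\cdots+h_m=0$ with $h_j\in I_\Delta^{\,j}$ would, upon applying $\phi$, give
\[ \phi(x^\alpha-y^\alpha)^m=-\sum_{j=1}^m \phi(h_j)\,\phi(x^\alpha-y^\alpha)^{m-j}; \]
the left side has $s$-order exactly $m\langle\ell,\alpha\rangle$, while the $j$-th term on the right lies in $\phi(I_\Delta)^{\,j}\cdot(s^{(m-j)\langle\ell,\alpha\rangle})=(s^{\,jw+(m-j)\langle\ell,\alpha\rangle})$ and so has $s$-order $>m\langle\ell,\alpha\rangle$ because $w>\langle\ell,\alpha\rangle$ and $j\geq1$. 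This contradiction forces $x^\alpha-y^\alpha\notin\overline{I_\Delta}$, i.e. $\alpha\notin\Ga^s$.

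The one step that genuinely requires care is the construction of $\ell$: the arc $\phi$ is a bona fide local morphism only because all $\ell_j$ are positive, and positivity of the separating normal is precisely what the smooth-normalization hypothesis delivers, through $\sd=\R^d_{\geq0}$ and the resulting upward-closedness of $K_+(\Ga)$. Everything else is formal bookkeeping of orders. As an alternative that avoids arcs, one can argue with the weighting $\deg x_j=\deg y_j=\ell_j$: then $I_\Delta$ is weighted homogeneous and generated in weights $\geq w$, so by the weighted analogue of Proposition \ref{CerraduraEnteraHomogenea} its integral closure contains no nonzero element of weight $<w$, whereas $x^\alpha-y^\alpha$ is weighted homogeneous of weight $\langle\ell,\alpha\rangle<w$.
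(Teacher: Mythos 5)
Your argument is correct, but it is genuinely different from the one in the paper. The paper's proof is a two-line deduction from previously cited facts: the multiplicity of a toric germ equals the normalized volume of the complement of $K_+(\Ga)$ (GKZ), and $\mult(X,0)=\mult(X^s,0)$ by Remark \ref{Rem1}; since $K_+(\Ga)\subset K_+(\Ga^s)$, equality of volumes forces $K_+(\Ga)=K_+(\Ga^s)$, and any nonzero $\alpha\in\Ga^s$ lies in $K_+(\Ga^s)$. You instead give a self-contained valuative certificate: because $\sd=\R^d_{\geq0}$, the polyhedron $K_+(\Ga)=\mathrm{conv}\{a_1,\ldots,a_n\}+\R^d_{\geq0}$ is closed with recession cone $\R^d_{\geq0}$, so the separating functional for $\alpha\notin K_+(\Ga)$ can be taken strictly positive and integral, and the resulting monomial arc makes $x^\alpha-y^\alpha$ vanish to strictly smaller order than $I_\Delta$, killing any integral dependence equation — this is exactly the mechanism the paper itself uses later in Example \ref{WU} and Theorem \ref{CalculoLip}, here run in general. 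What your route buys is independence from the GKZ multiplicity formula and from the bi-Lipschitz invariance of multiplicity, plus an explicit witness arc; what it costs is length, and it is the step you flag (positivity of the separating normal, which is where smooth normalization enters) that carries the real content. Two small points: the statement must implicitly exclude $\alpha=0$ (which lies in $\N^d\setminus K_+(\Ga)$ yet in $\Ga^s$), and you rightly set this aside; and your closing ``alternative'' via a weighted version of Proposition \ref{CerraduraEnteraHomogenea} invokes a result not established in the paper, so it should be read as a remark rather than a second proof. Neither affects the validity of your main argument.
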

  \begin{proof}
     By \cite[Chapter 5, theorem 3.14]{GKZ08}, the multiplicity of a toric germ is determined by the (normalized) volume of the complement of $K_+(\Ga)$ in $\N^d$. 
     But we know from \ref{Rem1} that a germ $(X,0)$ and its Lipschitz saturation $(X^s,0)$ have the same multiplicity, and so we must have that
     $K_+(\Ga)=K_+(\Ga^s)$ which finishes the proof. 
      \end{proof}

    In the case of curves, this means that if $m$ is the minimal non-zero element of $\Ga \subset \N$ then $k\in \N$, with $ k< m$ implies that $k \notin \Ga^s$ (see section \ref{secLips}). Recall that in this case $m$ is equal to the multiplicity of the curve. 
    
    \begin{exam}\label{WU}The Whitney Umbrella. \\
       The surface singularity $(X,0) \subset (\C^3,0)$ defined by the equation $y^2-x^2z=0$ is a toric singularity with smooth normalization
       given by
       \[ (u,v) \mapsto (u, uv,v^2)\] 
       and associated semigroup $\Ga \subset \N^2$ with minimal generating set $\{(1,0),(1,1),(0,2)\}$. This translates 
       to $\Oxo \cong \C\{u,uv,v^2\} \subset \C\{u,v\}$ and a point $(a,b) \in \Ga^s$ is identified with the monomial $u^av^b \in \Oxo^s \subset \C\{u,v\}$.
       
       \begin{figure}[H]
       \centering
       \includegraphics[width=\textwidth]{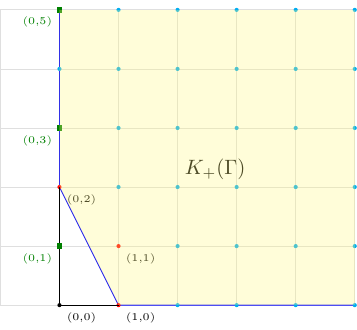}
       \caption{The square points are not in $\Ga$.}
       \end{figure}
       
             Note that $\N^2\setminus \Ga=\left\{ (0,2k+1) \,|\, k\in \N\right\}$. We will show that none of them are in $\Ga^s$. Hence, $\Ga=\Ga^s$. In the proof of theorem \ref{SatTor} we showed that $\Oxo^s=\C\{\Ga^s\}$. We conclude that $\Oxo=\Oxo^s$ and so the Whitney Umbrella coincides with its Lipschitz saturation $(X^s,0)$. 


         To begin with, the only point in $\N^2 \setminus K_+(\Ga)$ is $(0,1)$, and so $(0,1) \notin \Ga^s$ by the previous proposition. For the points
         of the form $(0,r)$ with $r>1$ odd, consider the ideal 
        \[ I_\Delta=\left< x_1-y_1, x_1x_2-y_1y_2,x_2^2-y_2^2\right> \C\left\{ x_1,x_2,y_1,y_2\right\} \]  
         Taking the arc $\varphi:(\C,0) \to \left(\C^2 \times \C^2, 0\right)$ defined by $t \mapsto (t^{r+1},t,t^{r+2},-t)$
         we have the corresponding morphism of analytic algebras $\varphi^*: \C\left\{ x_1,x_2,y_1,y_2\right\} \to \C\{t\}$ 
         such that
         \[ \varphi^*(x_2^r -y_2^r)=2t^r \notin \left<\varphi^*(I_\Delta)\right>=\left<t^{r+1}\right>.\]
         By \cite[Thm 2.1]{LT08} this implies that $x_2^r -y_2^r \notin \overline{I_\Delta}$, i.e. $v^r \notin \Oxo^s$. 
        
           \end{exam}
  
\section{Some examples.} \label{TorSurf}
 
  In this section we will show how to calculate the Lipschitz saturation of some families of toric singularities, starting with products of curves.\\
  
  Let $(X_1,0)$ and $(X_2,0)$ be two germs of toric singularities of dimension $1$ defined by the semigroups $\Ga_1$ and $\Ga_2$
  with corresponding minimal generating sets $\A_1=\{\ga_1,\ldots,\ga_m\}$ and $\A_2=\{\omega_1,\ldots, \omega_n\}$.
  The germ $(X,0)=(X_1 \times X_2,0)$ is a toric surface singularity with semigroup $\Ga \subset \N^2$ generated by 
  $\A=\left\{(\ga_i,0), (0,\omega_j)\right\}_{i,j}$, that is $\Ga=\Ga_1 \times \Ga_2$. Note that the normalization of $(X,0)$ is smooth and the normalization map can be written as
  \begin{align*}
    \eta: (\C^2,0) & \longrightarrow (X,0) \subset (\C^{m+n},0) \\
                 (u,v) &\mapsto \left( u^{\ga_1},\ldots,u^{\ga_m},v^{\omega_1},\ldots, v^{\omega_n}\right).
  \end{align*}
  
  \begin{pro}\label{ProdCurv} For a germ of surface singularity $(X,0)=(X_1 \times X_2,0) $ defined by a product of toric curves, the Lipschitz saturation $(X^s,0)$
  is a toric surface singularity with semigroup
  \[\Ga^s =\Ga_1^s \times \Ga_2^s,\]
  where $\Ga_1^s$ and $\Ga_2^s$ are the semigroups of the Lipschitz saturated curves $(X_1^s,0)$ and $(X_2^s,0)$ described in section \ref{secLips}.
  \end{pro}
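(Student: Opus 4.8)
The plan is to reduce the statement, via Theorem~\ref{SatTor} and Lemma~\ref{Monomial}, to a question about which monomials lie in the saturation, and then to read that off from the concrete binomial ideal $I_\Delta$. Since the normalization of $(X_1\times X_2,0)$ is smooth, Theorem~\ref{SatTor} applies and (by its proof) $\Oxo^s=\C\{\Ga^s\}$ with $\Ga^s=\{\alpha\in\N^2\mid u^\alpha\in\Oxo^s\}$, where $\overline{\Oxo}=\C\{u,v\}$. In the notation of the proof of Lemma~\ref{Monomial}, the ideal $I_\Delta$ of Definition~\ref{DefSat} is
\[ I_\Delta=\langle\, x_1^{\ga_1}-y_1^{\ga_1},\ldots,x_1^{\ga_m}-y_1^{\ga_m},\ x_2^{\omega_1}-y_2^{\omega_1},\ldots,x_2^{\omega_n}-y_2^{\omega_n}\,\rangle\subset\C\{x_1,x_2,y_1,y_2\}, \]
so it splits as $I_\Delta=I_{\Delta_1}\,\C\{x_1,x_2,y_1,y_2\}+I_{\Delta_2}\,\C\{x_1,x_2,y_1,y_2\}$, with $I_{\Delta_1}=\langle x_1^{\ga_i}-y_1^{\ga_i}\rangle\subset\C\{x_1,y_1\}$ and $I_{\Delta_2}=\langle x_2^{\omega_j}-y_2^{\omega_j}\rangle\subset\C\{x_2,y_2\}$ the ideals computing the saturations of the curves $(X_1,0)$ and $(X_2,0)$. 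Thus it is enough to show, for $(a,b)\in\N^2$,
\[ x_1^ax_2^b-y_1^ay_2^b\in\overline{I_\Delta}\qquad\Longleftrightarrow\qquad x_1^a-y_1^a\in\overline{I_{\Delta_1}}\ \text{ and }\ x_2^b-y_2^b\in\overline{I_{\Delta_2}}; \]
by the description of the Lipschitz saturation of a toric curve recalled in Section~\ref{secLips}, the right-hand conditions say exactly $a\in\Ga_1^s$ and $b\in\Ga_2^s$, so this equivalence is precisely the asserted equality $\Ga^s=\Ga_1^s\times\Ga_2^s$.

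For the implication $(\Leftarrow)$ I would invoke persistence of integral closure under the inclusions $\C\{x_1,y_1\}\hookrightarrow\C\{x_1,x_2,y_1,y_2\}$ and $\C\{x_2,y_2\}\hookrightarrow\C\{x_1,x_2,y_1,y_2\}$, which gives $\overline{I_{\Delta_1}},\overline{I_{\Delta_2}}\subset\overline{I_\Delta}$, and then use the identity $x_1^ax_2^b-y_1^ay_2^b=x_1^a(x_2^b-y_2^b)+y_2^b(x_1^a-y_1^a)$ together with the fact that $\overline{I_\Delta}$ is an ideal. (This is the algebraic shadow of the elementary fact that a product of two germs of Lipschitz functions is again Lipschitz.)

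The implication $(\Rightarrow)$ is where the real content lies. Assuming $x_1^ax_2^b-y_1^ay_2^b\in\overline{I_\Delta}$, I would apply the surjection $\rho\colon\C\{x_1,x_2,y_1,y_2\}\to\C\{x_1,y_1,x_2\}$ that identifies $y_2$ with $x_2$. As $\rho$ kills the generators $x_2^{\omega_j}-y_2^{\omega_j}$, persistence yields $x_2^b(x_1^a-y_1^a)=\rho(x_1^ax_2^b-y_1^ay_2^b)\in\overline{I_{\Delta_1}\,\C\{x_1,y_1,x_2\}}$. Now I would use that $I_{\Delta_1}\,\C\{x_1,y_1,x_2\}$ is extended from $\C\{x_1,y_1\}$, hence generated in $x_2$-degree $0$: in an equation of integral dependence $\bigl(x_2^b(x_1^a-y_1^a)\bigr)^r+h_1\bigl(x_2^b(x_1^a-y_1^a)\bigr)^{r-1}+\cdots+h_r=0$ with $h_\ell\in\bigl(I_{\Delta_1}\,\C\{x_1,y_1,x_2\}\bigr)^\ell=I_{\Delta_1}^\ell\,\C\{x_1,y_1,x_2\}$, the coefficient $p_\ell$ of $x_2^{\ell b}$ in $h_\ell$ lies in $I_{\Delta_1}^\ell$ (since $I_{\Delta_1}^\ell\subset\C\{x_1,y_1\}$); extracting the component of $x_2$-degree $rb$ of the whole equation and cancelling the nonzerodivisor $x_2^{rb}$ produces $(x_1^a-y_1^a)^r+p_1(x_1^a-y_1^a)^{r-1}+\cdots+p_r=0$, i.e. $x_1^a-y_1^a\in\overline{I_{\Delta_1}}$, so $a\in\Ga_1^s$. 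The symmetric map identifying $x_1$ with $y_1$ gives $y_1^a(x_2^b-y_2^b)\in\overline{I_{\Delta_2}\,\C\{y_1,x_2,y_2\}}$ and, in the same way, $b\in\Ga_2^s$. This homogeneity bookkeeping — isolating the top $x_2$-degree part of the integral dependence equation and checking that its coefficients remain in the right powers of $I_{\Delta_1}$ — is a close relative of the $(\C^*)$-action argument in the proof of Lemma~\ref{Monomial}, and it is the step I expect to require the most care.

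Combining the two implications gives $\Ga^s=\Ga_1^s\times\Ga_2^s$. Finally, since $\Ga_1^s$ and $\Ga_2^s$ are finitely generated numerical semigroups, $\Ga_1^s\times\Ga_2^s$ is a finitely generated subsemigroup of $\N^2$ contained in the strongly convex cone $\R_{\geq0}^2$, so it defines a toric surface singularity; moreover, being a toric germ with semigroup $\Ga_1^s\times\Ga_2^s$, it is isomorphic to $(X_1^s\times X_2^s,0)$, which is exactly the assertion of the proposition.
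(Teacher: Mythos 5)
Your proof is correct, and for the crucial implication it takes a genuinely different route from the paper. The reverse implication ($a\in\Ga_1^s$ and $b\in\Ga_2^s$ imply $u^av^b\in\Oxo^s$) is essentially the paper's argument: both rest on $\overline{I_{\Delta_1}},\overline{I_{\Delta_2}}\subset\overline{I_\Delta}$, the paper then simply citing that $\Oxo^s$ is an analytic algebra where you make the product explicit via $x_1^ax_2^b-y_1^ay_2^b=x_1^a(x_2^b-y_2^b)+y_2^b(x_1^a-y_1^a)$. For the forward implication, however, the paper leaves the algebra entirely: it uses the characterization of $\Oxo^s$ as the ring of locally Lipschitz meromorphic functions (Remark \ref{Rem1}) and restricts $u^\alpha v^\beta$ to the slices $X_1\times\{v_0^\omega\}$ and $\{u_0^\gamma\}\times X_2$ to deduce $\alpha\in\Ga_1^s$ and $\beta\in\Ga_2^s$. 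You instead stay inside the integral-closure framework: the specialization $y_2\mapsto x_2$ kills the second block of generators, persistence gives $x_2^b(x_1^a-y_1^a)\in\overline{I_{\Delta_1}\C\{x_1,y_1,x_2\}}$, and extracting the $x_2$-degree-$rb$ component of an integral dependence equation (legitimate, since the coefficients of an element of $I_{\Delta_1}^\ell\C\{x_1,y_1,x_2\}$, expanded in powers of $x_2$, lie in $I_{\Delta_1}^\ell$) yields an equation of integral dependence for $x_1^a-y_1^a$ over $I_{\Delta_1}$ in $\C\{x_1,y_1\}$. This is the precise algebraic shadow of the paper's slicing, and each version has its merits: the paper's is shorter but imports the analytic (Lipschitz-function) description of the saturation, whereas yours is self-contained within the algebraic definition \ref{DefSat}, in the same spirit as the $(\C^*)^d$-action argument of Lemma \ref{Monomial}, and adapts directly to products of more than two factors or of higher-dimensional toric germs with split diagonal ideal.
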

  
  \begin{proof}
    We know that $\Oxo^s\subset \C\{u,v\}$ is an analytic algebra generated by monomials, so we need to characterize them. A monomial $u^\alpha v^\beta \in \Oxo^s$ 
    defines a meromorphic function on a neighborhood $U$ of the origin in $X$ which is locally Lipschitz with respect to the ambient metric. If we consider the normalization 
    of $(X,0)$ as before 
     \begin{align*}
    \eta: (\C^2,0) & \longrightarrow (X,0) \subset (\C^{m+n},0) \\
                 (u,v) &\mapsto \left( u^{\ga_1},\ldots,u^{\ga_m},v^{\omega_1},\ldots, v^{\omega_n}\right),
  \end{align*}
  then for any sufficiently small $v_0$ the restriction of $u^\alpha v^\beta$ to $\left(X_1\times \{v_0^\omega\},(0,v_0^\omega)\right)$ 
  tells us that $u^\alpha v_0^\beta$ defines a meromorphic locally Lipschitz function on $(X_1,0)$ and so $u^\alpha \in \mathcal{O}_{X_1,0}^s$; equivalently
  $\alpha \in \Ga_1^s$. The same reasoning with the restriction to $\left( \{u_0^\gamma\} \times X_2, (u_0^\ga,0)\right)$ tells us that $\beta \in \Ga_2^s$
  and so $(\alpha,\beta) \in \Ga_1^s \times \Ga_2^s$.

    On the other hand, in this setting we have the ideal $I_\Delta$ defined by 
   \[I_\Delta= \left< x_1^{\ga_1}-y_1^{\ga_1},\ldots, x_1^{\ga_m}-y_1^{\ga_m},x_2^{\omega_1}-y_2^{\omega_1}, \ldots,
       x_2^{\omega_n}-y_2^{\omega_n} \right>\C\{x_1,x_2,y_1,y_2\}.\]
    Let $\alpha \in \Ga_1^s$ then by definition we have 
    \[ x_1^\alpha - y_1^\alpha \in \overline{\left<x_1^{\ga_1}-y_1^{\ga_1},\ldots,x_1^{\ga_m}-y_1^{\ga_m}\right>}\C\{x_1,y_1\}.\]
    In particular, $x_1^\alpha - y_1^\alpha \in \overline{I_\Delta}$ and so $u^\alpha \in \Oxo^s \subset \C\{u,v\}$. Analogously
    for every $\beta \in \Ga_2^s$ we have $v^\beta \in \Oxo^s$. Since $\Oxo^s$ is an analytic algebra, this implies that for every 
    $\alpha \in \Ga_1^s$ and $\beta \in \Ga_2^s$  the monomial $u^\alpha v^\beta \in \Oxo^s$ which finishes the proof. 
  \end{proof}
  
\begin{coro}\label{edim-mult prod}
Let $(X,0)=(X_1 \times X_2,0) $ be a product of toric curves. Then $\edim (X^s,0)=\edim (X_1^s,0)+\edim (X_2^s,0)$. In addition, $\mult (X^s,0)=\mult (X_1^s,0)\cdot\mult (X_2^s,0)$. 
\end{coro}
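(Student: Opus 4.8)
The plan is to read off the embedding dimension and multiplicity of $(X^s,0)$ directly from the combinatorial description $\Ga^s=\Ga_1^s\times\Ga_2^s$ provided by Proposition \ref{ProdCurv}, using standard facts about toric germs. For the embedding dimension, recall that for a toric singularity $(Y,0)$ with associated semigroup $\Lambda$, the embedding dimension equals the cardinality of the minimal generating set of $\Lambda$. So the first step is to show that the minimal generating set of a product semigroup $\Ga_1^s\times\Ga_2^s$ is $\{(\gamma,0):\gamma\in\mathcal G_1\}\cup\{(0,\omega):\omega\in\mathcal G_2\}$, where $\mathcal G_i$ is the minimal generating set of $\Ga_i^s$. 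This is elementary: any element $(a,b)\in\Ga_1^s\times\Ga_2^s$ splits as $(a,0)+(0,b)$, so those elements generate; conversely an element of the form $(\gamma,0)$ with $\gamma$ indecomposable in $\Ga_1^s$ cannot be a nonnegative integer combination of other generators, since in any such combination the second coordinate forces all $(0,\omega)$-terms to vanish and the first coordinate then expresses $\gamma$ inside $\Ga_1^s$. Hence $\edim(X^s,0)=|\mathcal G_1|+|\mathcal G_2|=\edim(X_1^s,0)+\edim(X_2^s,0)$.

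For the multiplicity statement I would invoke the fact recalled in the proof of Proposition \ref{NoAgrego} (from \cite[Chapter 5, Theorem 3.14]{GKZ08}) that the multiplicity of a $d$-dimensional toric germ with semigroup $\Lambda\subset\N^d$ (with $\R_{\geq0}\Lambda=\R_{\geq0}^d$, as holds here since normalizations are smooth) is the normalized volume of $\R_{\geq0}^d\setminus K_+(\Lambda)$, i.e. $d!$ times the Euclidean volume of that region. Since $(X_i^s,0)$ is a curve, its multiplicity is $\min(\Ga_i^s\setminus\{0\})=:m_i$, and $K_+(\Ga_i^s)=[m_i,\infty)$, so the "missing" region in $\R_{\geq0}$ is $[0,m_i)$ of length $m_i$. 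The key computation is then that $K_+(\Ga_1^s\times\Ga_2^s)$ — or rather the complement $\R_{\geq0}^2\setminus K_+(\Ga_1^s\times\Ga_2^s)$ — has area $m_1\cdot m_2$, whence $\mult(X^s,0)=2!\cdot\tfrac{1}{2}m_1m_2=m_1m_2=\mult(X_1^s,0)\cdot\mult(X_2^s,0)$. Alternatively, and perhaps more cleanly, I would use multiplicativity of multiplicity for products: $\mult(A\otimes B)=\mult(A)\cdot\mult(B)$ for the analytic tensor product of local analytic algebras, applied to $\Oxo^s\cong\mathcal O_{X_1^s,0}\,\widearc\otimes_\C\,\mathcal O_{X_2^s,0}$, which follows from Proposition \ref{ProdCurv} together with $\C\{\Ga_1^s\times\Ga_2^s\}\cong\C\{\Ga_1^s\}\widearc\otimes_\C\C\{\Ga_2^s\}$, combined with the already-known equality $\mult(X_i,0)=\mult(X_i^s,0)$ from Remark \ref{Rem1}.

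I expect the main obstacle to be a clean justification of the combinatorial claim about minimal generators of a direct product of affine semigroups — not because it is deep, but because one must be careful that the minimality is genuine, i.e. that no generator $(\gamma,0)$ with $\gamma$ a minimal generator of $\Ga_1^s$ accidentally becomes decomposable once we allow the extra generators $(0,\omega)$; the second-coordinate bookkeeping described above handles exactly this. The multiplicity half is then routine given the volume formula or the multiplicativity of multiplicity under analytic tensor product, so I would present whichever of the two arguments keeps the exposition shortest, most likely the volume computation since Proposition \ref{NoAgrego} already puts that machinery on the table.
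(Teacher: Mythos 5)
Your proposal is correct and follows the same route as the paper, which simply observes that the embedding dimension is the cardinality of the minimal generating set of the semigroup and that the multiplicity is the normalized volume of the complement of $K_+(\Ga^s)$, so both claims follow from $\Ga^s=\Ga_1^s\times\Ga_2^s$. You merely spell out the two combinatorial details (the minimal generating set of a product semigroup and the area $m_1m_2/2$ of the missing triangle) that the paper leaves implicit.
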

\begin{proof}
First, recall that the embedding dimension of the origin of a toric variety i.e., the dimension of its Zariski tangent space, coincides with the cardinality of the minimal generating set of the correspondig semigroup. On the other hand, the multiplicity at the origin of a toric variety is also described combinatorially in terms of the semigroup (see proposition \ref{NoAgrego}). Hence, both assertions follow from proposition \ref{ProdCurv}.
\end{proof}

\begin{rem}
Notice that both proposition \ref{ProdCurv} and corollary \ref{edim-mult prod} hold for the product of any finite number of toric curves, with the same proof.
\end{rem}

  \begin{exam}
    Starting from the space curves $(X_1,0)$ and $(X_2,0)$ parametrized respectively by
    \[ u \longmapsto (u^4,u^6,u^7), \hspace{1in} v \longmapsto (v^6,v^9,v^{11}),\]
    we get the toric surface $(X,0)\subset (\C^6,0)$ of multiplicity $24$ and embedding dimension $6$ defined by the ideal 
    \[ I_X=\left< y^2-x^3, c^3-a^4b,b^2-a^3, z^2-x^2y \right>\C\{x,y,z,a,b,c\}. \]
    The normalization map is given by:
      \begin{align*}
    \eta: (\C^2,0) & \longrightarrow (X,0) \subset (\C^{6},0) \\
                 (u,v) &\mapsto \left( u^4,u^6,u^7,v^6,v^9, v^{11}\right).
  \end{align*}
  Following the procedure described in section \ref{secLips} we obtain that the semigroup $\Ga_1^s \subset \N $ is generated by $\mathcal{A}_1=\{4,6,7,9\}$ 
  and the semigroup $\Ga_2^s \subset \N$ is generated by $\mathcal{A}_2=\{6,9,11,13,14,16\}$. By proposition \ref{ProdCurv} the Lipschitz saturation $(X^s,0) \subset 
  (\C^{10},0)$  is the toric singularity defined by the semigroup $\Ga^s \subset \N^2$ generated by the set
  \[ \mathcal{A}= \left\{(4,0),(6,0),(7,0),(9,0),(0,6),(0,9),(0,11),(0,13),(0,14),(0,16) \right\},\]
and with normalization map given by:
      \begin{align*}
    \eta: (\C^2,0) & \longrightarrow (X^s,0) \subset (\C^{6},0) \\
                 (u,v) &\mapsto \left( u^4,u^6,u^7,u^9,v^6,v^9, v^{11},v^{13},v^{14},v^{16}\right).
  \end{align*}
    $(X^s,0)$ is a toric germ of multiplicity $24$ and embedding dimension $10$. \\

       Since $(X,0)\subset (\C^6,0)$ is a germ of singular surface, we have that the cone $C_5(X,0)$ is of dimension $3$ or $4$ and so almost every linear projection 
       \begin{align*} \pi: (\C^6,0) &\longrightarrow (\C^4,0) \\
                             \begin{pmatrix} z_1 \\  \vdots \\ z_6 \end{pmatrix} &\longmapsto \begin{pmatrix} a_{11} & \cdots & a_{16} \\ \vdots & \vdots & \vdots \\
                            a_{41} & \cdots & a_{46} \end{pmatrix}  \begin{pmatrix} z_1 \\  \vdots \\ z_6 \end{pmatrix}
       \end{align*}
       is generic, in the sense that it is transversal to this cone (see \cite[Prop. 8.4.3]{GiNoTe20}). Using proposition \ref{ProyGen}, for each such $\pi$ we get a 
       germ of singular surface $(Y_{\pi},0):=(\pi(X),0) \subset (\C^4,0)$ of multiplicity $24$ and embedding dimension at most $4$, with normalization map (see the proof of proposition \ref{ProyGen})
       \begin{align*} 	\eta_{Y_\pi}: (\C^2,0) &\longrightarrow (Y_\pi,0) \\
                                 (u,v) & \longmapsto  \begin{pmatrix} a_{11} & \cdots & a_{16} \\ \vdots & \vdots & \vdots \\
                            a_{41} & \cdots & a_{46} \end{pmatrix}  \begin{pmatrix} u^4 \\  u^6 \\ u^7 \\ v^6 \\ v^9 \\ v^{11} \end{pmatrix}
       \end{align*}
       This gives us a family of surfaces $(Y_\pi,0) \subset (\C^4,0)$ that are not isomorphic to $(X,0)$ whose Lipschitz saturation $(Y_{\pi}^s,0)$ is toric and isomorphic
       to $(X^s,0)$. 
  \end{exam}

      We will  now a consider a family of hypersurfaces $(X,0) \subset (\C^3,0)$ with equation of the form 
      \[y^N-x^{\alpha N}z^\beta=0,\]
      where $\alpha,\beta \geq 1$ and $\text{ mcd}(\beta, N)=1$. It is a family of toric surface singularities with semigroup $\Ga$ generated
      by $\A=\{(1,0),(\alpha,\beta),(0,N) \}$.
      
  \begin{teo}\label{CalculoLip}
      Let $(X,0)\subset (\C^3,0)$ be the toric hypersurface singularity with normalization map given by 
      \begin{align*}
          \eta:(\C^2,0) &\longrightarrow (X,0) \\
                  (u,v) &\mapsto \left( u, u^\alpha v^\beta, v^N\right),
      \end{align*}
      where $\alpha,\beta \geq 1$ and $\text{ mcd}(\beta, N)=1$. Let $T \subset \N$ be the numerical semigroup generated by $\{N,\beta\}$, and $T^s \subset \N$ be its saturation as in section \ref{secLips}. The monomial $u^av^b \in \C\{u,v\}$ is in the Lipschitz saturation $\Oxo^s$ if and only if $b=mN$  for some $m \in \N$ or  $a \geq \alpha$ and $ b \in T^s$. 
       \end{teo}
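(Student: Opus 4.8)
The plan is to analyze the ideal
\[ I_\Delta = \left< x_1 - y_1,\ x_1^\alpha x_2^\beta - y_1^\alpha y_2^\beta,\ x_2^N - y_2^N \right>\C\{x_1,x_2,y_1,y_2\} \]
and to decide, for a given monomial $u^a v^b$, whether $x_1^a x_2^b - y_1^a y_2^b \in \overline{I_\Delta}$. Throughout I will use Lemma \ref{Monomial} and the proof of Theorem \ref{SatTor}: $\Oxo^s = \C\{\Ga^s\}$ and $\Ga^s$ is determined monomial-by-monomial. I will also use the valuative (arc) criterion for integral closure (\cite[Thm. 2.1]{LT08}, as in Example \ref{WU}) in both directions, and Proposition \ref{NoAgrego} to rule out exponents outside $K_+(\Ga)$ when convenient.

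First I would settle the ``if'' direction in two cases. Case $b = mN$: then $u^a v^{mN}$ is the pullback of the regular monomial $x_1^a z^m$ on $X$ (since $z \mapsto v^N$), hence lies in $\Oxo \subset \Oxo^s$; no genuine saturation is needed. Case $a \geq \alpha$ and $b \in T^s$: here I want to factor $u^a v^b = u^{a-\alpha}\cdot(u^\alpha v^\beta)^{?}\cdots$, but more cleanly I would argue as in Proposition \ref{ProdCurv}. The germ $(X,0)$ maps onto the curve germ with parametrization $t \mapsto (t^N, t^\beta)$ (the ``$v$-slice'' after fixing $u = u_0 \ne 0$), whose Lipschitz saturation has semigroup $T^s$; restricting a candidate Lipschitz function to such slices shows that membership in $\Oxo^s$ forces $b$-behaviour governed by $T^s$, and conversely that $b \in T^s$ together with enough $u$-factors (the condition $a \ge \alpha$ provides exactly one copy of the ``mixed'' generator $u^\alpha v^\beta$, allowing the binomial $x_2^b - y_2^b \in \overline{\langle x_2^N - y_2^N, x_2^\beta - y_2^\beta\rangle}$ — valid since $b \in T^s$ — to be transported into $\overline{I_\Delta}$ after multiplying by $x_1^{a-\alpha}x_1^\alpha x_2^\beta$ and absorbing the mismatch using $x_1 - y_1 \in I_\Delta$). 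I would make the transport precise by writing
\[ x_1^a x_2^b - y_1^a y_2^b = x_1^{a-\alpha}\bigl(x_1^\alpha x_2^b - y_1^\alpha y_2^b\bigr) + (x_1^{a-\alpha}-y_1^{a-\alpha})y_1^\alpha y_2^b \]
and handling each summand: the second lies in $(x_1 - y_1) \subset I_\Delta$, and for the first I replace $x_2^b$ by a sum of multiples of $x_2^N - y_2^N$ and $x_1^\alpha x_2^\beta - y_1^\alpha y_2^\beta$ up to the same kind of $x_1 - y_1$ correction, using that $x_2^b \in \overline{\langle x_2^N, x_2^\beta\rangle}$-type relations hold because $b \in T^s$ and invoking Proposition \ref{CerraduraEnteraHomogenea}/the curve case to know these relations are integral.

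For the ``only if'' direction, suppose $u^a v^b \in \Oxo^s$ with $b$ not a multiple of $N$; I must show $a \ge \alpha$ and $b \in T^s$. That $b \in T^s$ follows from the slice argument above: fixing $u = u_0 \ne 0$ and restricting exhibits $v^b$ as a Lipschitz meromorphic function on the curve $(v^N, v^\beta)$, so $b \in T^s$ by the curve description in Section \ref{secLips}. To force $a \ge \alpha$, I would test the integral dependence relation against a carefully chosen arc $\varphi:(\C,0)\to(\C^2\times\C^2,0)$, in the spirit of Example \ref{WU}: take something like $\varphi(t) = (t^p, t, \zeta t^p, t)$ or $(t^p, t, t^p, \omega t)$ with $\omega$ an $N$-th root of unity chosen so that $\varphi^*$ kills the generators $x_1 - y_1$ and $x_2^N - y_2^N$ to high order while keeping $\varphi^*(x_1^\alpha x_2^\beta - y_1^\alpha y_2^\beta)$ of order exactly $p\alpha + \beta$; then $\varphi^*(x_1^a x_2^b - y_1^a y_2^b)$ has order $\approx pa + b$ (for $b \notin N\N$ the $x_2^b$ and $y_2^b$ terms do not cancel), and the valuative criterion forces $pa + b \ge p\alpha + \beta$ for all large $p$, hence $a \ge \alpha$. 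The arc must be tuned with the constraint $\gcd(\beta, N) = 1$ so that the root of unity trick isolates the mixed generator cleanly.

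The main obstacle I anticipate is the ``if'' direction in the mixed case $a \ge \alpha$, $b \in T^s \setminus N\N$: one must genuinely produce the integral dependence equation for $x_1^a x_2^b - y_1^a y_2^b$ over $I_\Delta$, and the natural relations one has live in the curve ring $\C\{x_2, y_2\}$ (membership of $x_2^b - y_2^b$ in the integral closure of $\langle x_2^N - y_2^N, x_2^\beta - y_2^\beta\rangle$, i.e.\ the statement that $b \in T^s$), so the crux is a clean ``multiplier'' lemma showing that multiplying such a relation by $x_1^{a-\alpha}\cdot x_1^\alpha x_2^\beta$ (and symmetrically) lands it inside $\overline{I_\Delta}$, modulo the harmless $(x_1 - y_1)$ corrections. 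I would isolate this as a short lemma: if $g(x_2,y_2) \in \overline{\langle x_2^N - y_2^N, x_2^\beta - y_2^\beta\rangle}$ then $x_1^\alpha x_2^\beta \cdot g \in \overline{I_\Delta}$ modulo $(x_1-y_1)$, proved by applying the Leibniz-type substitution $x_2^\beta \rightsquigarrow (x_1^\alpha x_2^\beta - y_1^\alpha y_2^\beta)/x_1^\alpha + (y_1/x_1)^\alpha y_2^\beta$ termwise inside the integral dependence equation and then clearing denominators, noting $x_1$ is a unit in the relevant localization — the same faithful-flatness / localization device used in Proposition \ref{CerraduraEnteraHomogenea}.
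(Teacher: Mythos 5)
Your proposal is essentially correct and follows the same architecture as the paper's proof: the trivial case $b=mN$; the slice argument (restricting to $u=u_0\neq 0$, where $(X,0)$ contains a copy of the curve $y^N=z^\beta$) to force $b\in T^s$; the transport of the curve relation $x_2^b-y_2^b\in\overline{\left<x_2^N-y_2^N,\,x_2^\beta-y_2^\beta\right>}$ into $\overline{I_\Delta}$ using the key observation that $x_1^\alpha(x_2^\beta-y_2^\beta)\in I_\Delta$; and arcs to force $a\geq\alpha$. Two points of comparison. First, for the transport step the paper does not localize at $x_1$: it simply multiplies the degree-$m$ integral dependence equation of $f=x_2^b-y_2^b$ by $x_1^{\alpha m}$ and checks directly that $x_1^{\alpha k}g_k\in I_\Delta^k$, obtaining an integral dependence equation for $x_1^\alpha f$ over $I_\Delta$ and then correcting by $(x_1-y_1)$; your ``clearing denominators / $x_1$ is a unit in the localization'' variant is shakier (you would have to descend the integral dependence back from the localization), and the extra factor $x_2^\beta$ in your multiplier is unnecessary, so you should adopt the direct multiplication. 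Second, your arc for the lower bound on $a$ is genuinely different and arguably cleaner: the paper splits into $a=0$ (handled partly by Proposition \ref{NoAgrego} for $b<N$ and partly by the arc $t\mapsto(t^{k+1},t,t^{k+2},\theta t)$) and $0<a<\alpha$ (handled by $t\mapsto(t^{b+1},t,t^{b+1}+t^r,\theta t)$ with $r\gg 0$), whereas your one-parameter family $t\mapsto(t^p,t,t^p,\omega t)$ with $\omega$ a primitive $N$-th root of unity kills $x_1-y_1$ and $x_2^N-y_2^N$ exactly, gives $\mathrm{ord}_t\,\varphi^*(I_\Delta)=p\alpha+\beta$ and $\mathrm{ord}_t\,\varphi^*(x_1^ax_2^b-y_1^ay_2^b)=pa+b$ (nonzero precisely because $b\notin N\N$ and $\gcd(\beta,N)=1$), so letting $p\to\infty$ forces $a\geq\alpha$ uniformly in all cases. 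Do discard your first candidate arc $(t^p,t,\zeta t^p,t)$, which makes $\varphi^*(I_\Delta)=\left<t^p\right>$ and detects nothing; only the second works.
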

 \begin{proof} 
       In this setting the semigroup $\Ga$ is generated by $\A=\{(1,0),(\alpha,\beta),(0,N) \}$, and by the proof of theorem  \ref{SatTor} 
        $u^av^b \in \Oxo^s$ is equivalent to $(a,b) \in \Ga^s$.  \\
        
We first show that if $b=mN$ for some $m \in \N$ or $a \geq \alpha$ and $ b \in T^s$ then $(a,b)\in\Ga^s$. Suppose first that $b=mN$. Since $(1,0),(0,N)\in\Ga$, it follows that $(a,b)\in\Ga\subset \Ga^s$ for all $a\in\N$.

Now suppose that  $a \geq \alpha$ and $ b \in T^s$. Since $\Ga \subset \Ga^s$ we have that
       $u \in \Oxo^s$ and so it is enough to prove the statement for $a=\alpha$. By definition $u^\alpha v^b \in \Oxo^s$ if and only if
       $x_1^\alpha x_2^b -y_1^\alpha y_2^b \in \overline{I_\Delta}$ where 
       \[ I_\Delta=\left< x_1-y_1, x_1^\alpha x_2^\beta-y_1^\alpha y_2^\beta,x_2^N-y_2^N\right> \C\left\{ x_1,x_2,y_1,y_2\right\}. \]  
        The assumption $b \in T^s$ means that $v^b \in \Oo_{C,0}^s \subset \C\{v\}$, which can be rephrased in terms of integral closure of ideals by
        \[ x_2^b-y_2^b \in \overline{\left<x_2^\beta-y_2^\beta, x_2^N-y_2^N \right>}\C\{x_2,y_2\}.\]
        And so if we denote $x_2^b-y_2^b$ by  $f(x_2,y_2)$ we have an integral dependence equation in $\C\{x_2,y_2\}$ of the form:
        \[f^m+g_1(x_2,y_2)f^{m-1} + \cdots + g_m(x_2,y_2) =0,\]
        with $g_k(x_2,y_2) \in J^k=\left< (x_2^N-y_2^N)^i(x_2^\beta-y_2^\beta)^j\,|\, i+j=k\right>\C\{x_2,y_2\}$. Each $g_k$ will then be of the form
        \[g_k(x_2,y_2)= \sum_{j=0}^k h_j(x_2,y_2)(x_2^N-y_2^N)^{k-j}(x_2^\beta-y_2^\beta)^j.\]
        Multiplying the integral dependence equation by $x_1^{\alpha m}$ we obtain:
        \begin{equation}\label{DepInt}
         (x_1^\alpha f)^m + x_1^\alpha g_1\left(x_1^\alpha f \right)^{m-1} + \cdots + x_1^{\alpha(m-1)}g_{m-1}
         x_1^\alpha f + x_1^{\alpha m}g_m=0.\end{equation}
         But now 
         \[ x_1^{\alpha k}g_k(x_2,y_2)= \sum_{j=0}^k x_1^{\alpha (k-j)}h_j(x_2,y_2)(x_2^N-y_2^N)^{k-j}\left[ x_1^\alpha(x_2^\beta-y_2^\beta)\right]^j.\]
         Note that $x_1-y_1, x_1^\alpha x_2^\beta-y_1^\alpha y_2^\beta \in I_\Delta$ implies that $x_1^\alpha(x_2^\beta-y_2^\beta) \in I_\Delta$. In 
         particular we get 
         \[ (x_2^N-y_2^N)^{k-j}\left[ x_1^\alpha(x_2^\beta-y_2^\beta)\right]^j \in I_\Delta^k,\] 
         and so $x_1^{\alpha k}g_k \in I_\Delta^k$. This implies that equation (\ref{DepInt}) is an integral dependence equation for 
         $x_1^\alpha f$ over $I_\Delta$. Finally $x_1^\alpha f=x_1^\alpha \left(x_2^b -y_2^b\right) \in \overline{I_\Delta}$ and $x_1-y_1 \in I_\Delta$
         imply $x_1^\alpha x_2^b -y_1^\alpha y_2^b \in \overline{I_\Delta}$ which is what we wanted to prove. \\

Now we prove that $(a,b)\in\Ga^s$ implies $b=mN$ for some $m \in \N$ or $a \geq \alpha$ and $ b \in T^s$. 

If $b=mN$ for some $m\in\N$, we are done. Assume that $b\neq mN$ for all $m\in\N$. We show that $a\geq\alpha$ and $b\in T^s$.

        Note that for any $u_0 \neq 0$ we have an embedding of the plane toric curve $(C,0)$, with semigroup $T$ generated by $\{N,\beta\}$
        and defined by $y^N-z^\beta =0$, via the map
        \begin{align*}
            (C,0) &\hookrightarrow \left(X, (u_0,0,0)\right) \\ 
            (y,z)  &\mapsto  (u_0,u_0^\alpha y, z) \\
            (v^\beta, v^N) &\mapsto (u_0, u_0^\alpha v^\beta, v^N). 
        \end{align*}
        
        By remark \ref{Rem1} a monomial $u^av^b \in \Oxo^s$ defines a locally Lipschitz meromorphic function on a small enough neighborhood
        $U$ of $0$ in $X$. For any small enough $u_0$ it restricts to a locally Lipschitz meromorphic function $u_0^av^b$ on a neighborhood
        of $(u_0,0,0)$ in the embedded curve $C$. In particular $v^b \in \Oo_{C,0}^s$, or equivalently $b \in T^s$. 

To show that $a\geq\alpha$ we prove that $0\leq a<\alpha$ and $b\neq mN$ implies $(a,b)\notin\Ga^s$. We consider two cases: $a=0$ and $0<a<\alpha$.

             Following example \ref{WU}, note that all points 
       of the form $(0,k)$ with $k<N$ are in $\N^2 \setminus K_+(\Ga)$ and so they are not in $\Ga^s$ by proposition \ref{NoAgrego}. For the points
         of the form $(0,k)$ with $k>N$, $k\neq mN$, consider the ideal 
        \[ I_\Delta=\left< x_1-y_1, x_1^\alpha x_2^\beta-y_1^\alpha y_2^\beta,x_2^N-y_2^N\right> \C\left\{ x_1,x_2,y_1,y_2\right\}.\]  
         Let $\theta \in \C$ be a primitive $N$-th root of unity and consider the arc 
         \begin{align*}
         \varphi:(\C,0) &\to \left(\C^2 \times \C^2, 0\right)\\ t &\mapsto (t^{k+1},t,t^{k+2},\theta t). \end{align*}
         We have the corresponding morphism of analytic algebras $\varphi^*: \C\left\{ x_1,x_2,y_1,y_2\right\} \to \C\{t\}$ 
         such that 
         \begin{align*}
         \left<\varphi^*(I_\Delta)\right>\C\{t\}&=\left<\varphi^*(x_1-y_1), \varphi^*(x_1^\alpha x_2^\beta-y_1^\alpha y_2^\beta),\varphi^*(x_2^N-y_2^N) \right>  \\
               &= \left< (1-t)t^{k+1}, (1-\theta^\beta t^\alpha) t^{\alpha(k+1) + \beta}, 0 \right> \\
               &= \left<t^{k+1}\right>\C\{t\}.\end{align*}
         In particular, 
         \[ \varphi^*(x_2^k -y_2^k)=(1-\theta^k)t^k \notin \left<\varphi^*(I_\Delta)\right>=\left<t^{k+1}\right>.\]
         By \cite[Thm 2.1]{LT08} this impliest that $x_2^k -y_2^k \notin \overline{I_\Delta}$, i.e. $v^k \notin \Oxo^s$. 

All that is left to prove is that for $0<a< \alpha$ and $b \neq mN$ the monomial $u^av^b$ is not in $\Oxo^s$. We will once again use the arc criterion for integral dependence, but this time with the arc 
         \begin{align*}
         \psi:(\C,0) &\to \left(\C^2 \times \C^2, 0\right)\\ t &\mapsto (t^{b+1},t,t^{b+1}+t^r,\theta t), \end{align*}
         where $\theta \in \C$ is a primitive $N$-th root of unity. In this setting the image of $I_\Delta$ by the morphism $\psi^*$ in 
         $\C\{t\}$ is of the form:
          \begin{align*}
         \left<\varphi^*(I_\Delta)\right>\C\{t\}&=\left<\psi^*(x_1-y_1), \psi^*(x_1^\alpha x_2^\beta-y_1^\alpha y_2^\beta),\psi^*(x_2^N-y_2^N) \right>  \\
               &= \left< -t^r, \left[1-\theta^\beta \left( 1 +t^{r-b-1}\right)^\alpha \right] t^{\alpha(b+1) + \beta}, 0 \right> \\
               &= \left<t^{\alpha(b+1) + \beta}\right>\C\{t\} \hspace{0.1in} \text { for }r\text{ big enough.} \end{align*}
         On the other hand we have 
         \[ \psi^*\left(x_1^ax_2^b -y_1^ay_2^b\right)=\left[1-\theta^b\left( 1 +t^{r-b-1}\right)^a \right] t^{a(b+1) + b}. \]
          Using that  $0< a < \alpha$ it is straightforward to verify that   for every $b \geq 0$
          \[ \alpha(b+1) + \beta > a(b+1) +b\]
          and so $x_1^ax_2^b -y_1^ay_2^b  \notin \overline{I_\Delta}$, i.e. $u^av^b \notin \Oxo^s$.
         
         
 \end{proof}
 
\begin{coro}\label{edim-mult hypersurf}
Let $(X,0)\subset (\C^3,0)$ be a germ of toric hypersurface singularity as in theorem \ref{CalculoLip}. Then $(X^s,0)$ is a toric surface singularity of multiplicity $N$ and embedding dimension $N+1$.
 \end{coro}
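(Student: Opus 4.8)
The plan is to read off both invariants from the explicit description of $\Ga^s$ provided by Theorem \ref{CalculoLip}, together with the standard dictionary between toric germs and their semigroups. The multiplicity is immediate: by Remark \ref{Rem1} one has $\mult(X^s,0)=\mult(X,0)$, and since $(X,0)\subset(\C^3,0)$ is the hypersurface germ cut out by $y^N-x^{\alpha N}z^\beta$, its multiplicity equals the order of this equation; because $\alpha,\beta\geq1$ the two monomials have degrees $N$ and $\alpha N+\beta\geq N+1$, so that order --- and hence $\mult(X^s,0)$ --- is $N$. (Alternatively one may derive this from Proposition \ref{NoAgrego} via the normalized volume of $\N^2\setminus K_+(\Ga)$.)

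For the embedding dimension I would use, as in the proof of Corollary \ref{edim-mult prod}, that $\edim(X^s,0)$ is the cardinality of the minimal generating set of $\Ga^s$, and count that set. Combining Theorem \ref{CalculoLip} with the equivalence $u^av^b\in\Oxo^s\Leftrightarrow(a,b)\in\Ga^s$ yields the disjoint description
\[\Ga^s=\bigl(\N\times N\N\bigr)\ \sqcup\ \bigl\{\,(a,b)\ :\ a\geq\alpha,\ b\in T^s\setminus N\N\,\bigr\},\]
where $N\N$ denotes the set of multiples of $N$. The first step is to check that inside $\N\times N\N$ the only minimal generators of $\Ga^s$ are $(1,0)$ and $(0,N)$: neither admits a nontrivial decomposition in $\Ga^s$ --- a summand with vanishing first coordinate lies in $\Ga^s$ only if its second coordinate is a multiple of $N$ --- whereas any other element $(a,mN)$ is reduced by $(1,0)$ when $a\geq1$, and by $(0,N)$ when $a=0$, $m\geq2$.

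The second step is to analyse the elements $(a,b)$ with $a\geq\alpha$ and $b\in T^s\setminus N\N$, and to show such an element is a minimal generator of $\Ga^s$ precisely when $a=\alpha$ and $b$ is the least element of $T^s$ in its residue class modulo $N$. Indeed, if $a>\alpha$ one subtracts $(1,0)$; if $a=\alpha$ and $(\alpha,b)=(a_1,b_1)+(a_2,b_2)$ is a nontrivial decomposition in $\Ga^s$, then $b\notin N\N$ excludes $a_1,a_2<\alpha$ occurring together (that would force $b_1,b_2\in N\N$), so one summand must be $(\alpha,b_1)$ with $b_1\in T^s$, $b_1\equiv b\pmod N$ and $b_1<b$; conversely any such $b_1$ produces the splitting $(\alpha,b)=(\alpha,b_1)+(0,b-b_1)$. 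To finish one counts these generators: for each residue $r\in\{1,\dots,N-1\}$ the set $T^s\cap(r+N\Z)$ is nonempty --- the saturation procedure of Section \ref{secLips} applied to the coprime pair $\{N,\beta\}$ puts the whole half-line $[\max\{N,\beta\},\infty)$ into $T^s$, hence a complete residue system modulo $N$ --- so it has a least element $b_r\notin N\N$, contributing exactly one minimal generator $(\alpha,b_r)$. Together with $(1,0)$ and $(0,N)$ this gives $2+(N-1)=N+1$ minimal generators, so $\edim(X^s,0)=N+1$.

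The step I expect to be the main obstacle is the decomposability analysis for $(\alpha,b)$: one must treat with care the case $\alpha=1$ (where $0$ is the only first coordinate strictly below $\alpha$), exclude every way of pairing a \emph{column} element $(0,kN)$ with an element of first coordinate $\alpha$, and make explicit the Ap\'ery-type structure of $T^s$, namely that each residue class modulo $N$ meeting $T^s$ does so along an arithmetic progression of step $N$ with a unique minimum. Once this combinatorial bookkeeping is in place, the remaining verifications are routine.
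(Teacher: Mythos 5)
Your argument is correct, and it follows the same overall strategy as the paper --- read off both invariants from $\Ga^s$, with the multiplicity being immediate and the real content being the count of the minimal generating set --- but your execution of the counting step is genuinely different and, in fact, cleaner. The paper splits into the two cases $N<\beta$ and $\beta<N$, writes down an explicit candidate generating set $\A\cup\B$ in each case (e.g.\ $\A=\{(1,0),(\alpha,\beta),(0,N)\}$ and $\B=\{(\alpha,\beta+1),\dots,(\alpha,\beta+N-1)\}\setminus\{(\alpha,(k+1)N)\}$ when $\beta=kN+l$), and then checks generation and minimality by hand. You instead characterize the irreducible elements of $\Ga^s$ uniformly: besides $(1,0)$ and $(0,N)$, they are exactly the $(\alpha,b_r)$ with $b_r=\min\bigl(T^s\cap(r+N\N)\bigr)$ for $r=1,\dots,N-1$, which is an Ap\'ery-set description that makes the count $2+(N-1)=N+1$ transparent and dispenses with the case distinction; one checks easily that your $b_r$ reproduce the paper's explicit lists in both cases. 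The key reductions you flag --- that a summand with first coordinate $<\alpha$ forces its second coordinate into $N\N$ (using $\alpha\ge 1$), that $a_1+a_2=\alpha$ then forces the decomposition to be of the form $(\alpha,b_1)+(0,kN)$, and that $T^s\supset[\max\{N,\beta\},\infty)$ meets every residue class mod $N$ --- are all valid, and the implicit fact that the irreducibles of a pointed affine semigroup form its unique minimal generating set is standard and is also what the paper relies on. Your alternative derivation of the multiplicity from Remark \ref{Rem1} and the order of $y^N-x^{\alpha N}z^\beta$ is likewise fine; the paper simply invokes the combinatorial description directly.
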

\begin{proof}
By the combinatorial description of multiplicity in toric geometry, it follows that $N=\mult(X,0)=\mult(X^s,0)$. Similarly, the embedding dimension of $(X^s,0)$ corresponds to the cardinality of the minimal set of generators of $\Ga^s$. We exhibit this minimal set of generators and show that it has cardinality $N+1$.

Theorem \ref{CalculoLip} states that $(a,b)\in\Ga^s$ if and only if $b=mN$ for some $m \in \N$ or $a \geq \alpha$ and $b \in T^s$. We divide the proof in two cases.
\\

\noindent Case $N<\beta$. Write $\beta=kN+l$, $k\geq1$ and $0<l<N$. By the algorithm for computing $T^s$ we obtain $T^s=\{0,N,2N,\ldots,kN,\beta,\beta+1,\beta+2,\cdots\}$ (see section \ref{secLips}). Consider the sets 
\begin{align}
\A&=\{(1,0),(\alpha,\beta),(0,N) \},\notag\\
\B&=\{(\alpha,\beta+1),\ldots,(\alpha,\beta+N-1)\}\setminus\{(\alpha,(k+1)N)\}.\notag
\end{align}
We claim that $\Ga^s=\N(\A\cup\B)$. Firstly observe that $\A\cup\B\subset\Ga^s$. Now let $(a,b)\in\Ga^s$. If $b=mN$ then $(a,b)\in\Ga=\N(\A)\subset\N(\A\cup\B)$. In particular, this holds for the element $(\alpha,(k+1)N)$. Now assume that $a\geq\alpha$ and $b\in T^s$. Since $(1,0)\in\A$ it is enough to consider $a=\alpha$.
\begin{itemize}
\item Suppose $b<\beta$. Then $b=mN$ for some $m\in\N$. Hence $(\alpha,b)\in\N(\A\cup\B)$.
\item Suppose $b=\beta$. Then $(\alpha,b)\in\A\subset\N(\A\cup\B)$.
\item Suppose $\beta+1 \leq b\leq \beta+N-1$, $b\neq (\alpha,(k+1)N)$. Then $(\alpha,b)\in\B\subset\N(\A\cup\B)$.
\item Suppose $\beta+N\leq b$. Write $b-\beta=rN+s$, $r\geq1$, $0\leq s<N$. Then $(\alpha,b)=(\alpha,\beta+rN+s)=(\alpha,\beta+s)+r(0,N)\in\N(\A\cup\B)$.
\end{itemize}
Now we show that $\A\cup\B$ is minimal as a generating set. Indeed, first notice that no element of $\A$ can be generated by $\B$ and viceversa (because of the second entry of the vectors). Similarly, for each $(\alpha,\beta+i)\in\B$ it follows that $(\alpha,\beta+i)\notin\N(\A\cup\B\setminus\{(\alpha,\beta+i)\})$ (because of the first entry of the vectors). Hence $\A\cup\B$ is the minimal generating set of $\Ga^s$ and has cardinality $N+1$.
\\

\noindent Case $\beta<N$. Write $N=k\beta+l$, $k\geq1$ and $0<l<\beta$. We compute $T^s$ as before: $T^s=\{0,\beta,\ldots,k\beta,N,N+1,\cdots\}$. Consider the sets 
\begin{align}
\A=&\{(1,0),(\alpha,\beta),(0,N) \},\notag\\
\B=&\{(\alpha,2\beta),(\alpha,3\beta),\ldots,(\alpha,k\beta)\}\notag\\
&\cup\big[\{(\alpha,N+1),\ldots,(\alpha,N+(N-1))\}\setminus\{(\alpha,N+\beta),\ldots,(\alpha,N+k\beta)\}\big].\notag
\end{align}
We claim that $\Ga^s=\N(\A\cup\B)$. Firstly observe that $\A\cup\B\subset\Ga^s$. Now let $(a,b)\in\Ga^s$. If $b=mN$ then $(a,b)\in\Ga=\N(\A)\subset\N(\A\cup\B)$. Now assume that $a\geq\alpha$ and $b\in T^s$. As before, it is enough to consider $a=\alpha$.
\begin{itemize}
\item For each $i\in\{1,\ldots,k\}$ we have $(\alpha,N+i\beta)=(0,N)+(\alpha,i\beta)\in\N(\A\cup\B)$.
\item Suppose $b<N$. Then $b=m\beta$ for some $m\in\N$. Hence $(\alpha,b)\in\N(\A\cup\B)$.
\item Suppose $b=N$. Then $(\alpha,b)\in\N(\A)\subset\N(\A\cup\B)$.
\item Suppose $N+1 \leq b\leq N+(N-1)$, $b\neq(\alpha,N+i\beta)$, $i\in\{1,\ldots,k\}$. Then $(\alpha,b)\in\B\subset\N(\A\cup\B)$.
\item Suppose $2N\leq b$. Write $b-N=rN+s$, $r\geq1$, $0\leq s<N$. Then $(\alpha,b)=(\alpha,N+rN+s)=r(0,N)+(\alpha,N+s)\in\N(\A\cup\B)$.
\end{itemize}
A similar analysis as in the previous case shows that $\A\cup\B$ is the minimal generating set of $\Ga^s$ and has cardinality $N+1$.
\end{proof}
        
 
    
 \begin{exam}
     Consider the toric hypersurface singularity $(X,0)\subset (\C^3,0)$ with normalization map 
       \begin{align*}
          \eta:(\C^2,0) &\longrightarrow (X,0) \\
                  (u,v) &\mapsto \left( u, u^3 v^{11}, v^5 \right).
      \end{align*}
   It is defined by the equation $y^5-x^{15}z^{11}=0$ and so it has multiplicity $5$.  Following the notation of theorem \ref{CalculoLip} 
   we have the semigroup $T \subset \N$ generated by $\{5,11\}$ and its saturation $T^s \subset \N$ with minimal generating set
   $\{5,11,12,13,14\}$. 
   
   \begin{figure}[H]
       \centering
       \includegraphics[width=\textwidth]{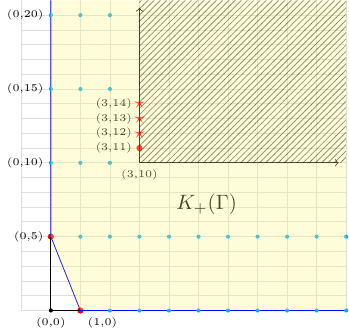}
       \caption{The integral points of the shaded region are contained in $\Ga^s$. The highlighted points mark the minimal generating set of $\Ga^s$.}
       \end{figure}
   
   From the previous corollary we have the semigroup $\Ga^s$ associated to the Lipschitz saturation $(X^s,0)$ with minimal generating 
   set \[\mathcal{A}^s=\{(1,0), (3,11), (3,12), (3,13), (3,14),(0,5)\},\] normalization map 
    \begin{align*}
          \eta:(\C^2,0) &\longrightarrow (X^s,0)\subset (\C^6,0) \\
                  (u,v) &\mapsto \left( u, u^3 v^{11}, u^3 v^{12},u^3 v^{13},u^3 v^{14},v^5 \right),
      \end{align*}
   and embedding dimension $6$.
       \end{exam}
       
    Interchanging the roles of $5$ and $11$ we get the following example.
    
    \begin{exam}
     Consider the toric hypersurface singularity $(X,0)\subset (\C^3,0)$ with normalization map 
       \begin{align*}
          \eta:(\C^2,0) &\longrightarrow (X,0) \\
                  (u,v) &\mapsto \left( u, u^3 v^5, v^{11} \right).
      \end{align*}
   It is defined by the equation $y^{11}-x^{33}z^{5}=0$ and so it has multiplicity $11$.  Following the notation of theorem \ref{CalculoLip} 
   we have again the semigroup $T \subset \N$ generated by $\{5,11\}$ and its saturation $T^s \subset \N$ with minimal generating set
   $\{5,11,12,13,14\}$. 
   
   \begin{figure}[H]
       \centering
       \includegraphics[width=\textwidth]{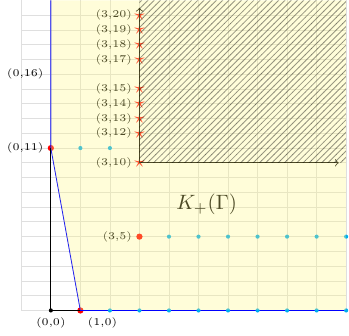}
        \caption{The integral points of the shaded region are contained in $\Ga^s$. The highlighted points mark the minimal generating set of $\Ga^s$.}
       \end{figure}
   
   From the previous corollary we have the semigroup $\Ga^s$ associated to the Lipschitz saturation $(X^s,0)$ with minimal generating 
   set 
\small{
\[\mathcal{A}^s=\{(1,0), (3,5), (3,10), (3,12), (3,13), (3,14), (3,15), (3,17), (3,18) , (3,19) , (3,20), (0,11)\},\] }
normalization map 
    \begin{align*}
          \eta:(\C^2,0) &\longrightarrow (X^s,0)\subset (\C^{12},0) \\
                  (u,v) &\mapsto \left( u, u^3v^5, u^3v^{10}, u^3 v^{12}, u^3 v^{13},u^3 v^{14},u^3 v^{15}, u^3 v^{17},
                  u^3 v^{18},u^3 v^{19},u^3 v^{20},v^{11} \right),
      \end{align*}
   and embedding dimension $12$.
       \end{exam}
       
        Note that in both examples we have $(3,10) + \N^2 \subset \Ga^s$. 
   
\begin{rem}
Recall from section \ref{secLips} that the Lipschitz saturation of an irreducible curve has multiplicity equal to its embedding dimension. The results and examples from this section shows that there is no general relation among the embedding dimension and the multiplicity of the Lipschitz saturation in higher dimensions.
\end{rem}

\begin{rem}
Contrary to the case of curves, for the moment it is not clear that there exists an explicit algorithm to compute the semigroup $\Gamma^s$ from the semigroup $\Gamma$, in general. We proved in theorem \ref{SatTor} that $\Gamma^s$ is finitely generated by Noetherian arguments, hence we do not have a constructive procedure to obtain a set of generators. 
However, the same theorem shows that the toric structure is preserved under Lipschitz saturation so, \textit{in principle}, there should be a semigroup operation that provides $\Gamma^s$. It remains an open question to find an algorithm that produces generators for $\Gamma^s$ in general.
\end{rem}

\section*{Acknowledgements}    

The authors would like to thank professors P. Gonzalez Perez, C. Huneke and I. Swanson for the fruitful e-mail exchanges that greatly helped in the preparation of this work. They also acknowledge support by PAPIIT grant IN117523 and CONAHCYT grant CF-2023-G-33. A. Giles Flores acknowledges support by UAA grants PIM21-1 and PIM24-7.

\vspace{.5cm}
{\footnotesize \textsc {D. Duarte, Centro de Ciencias Matem\'aticas, UNAM.} \\
E-mail: adduarte@matmor.unam.mx}\\
{\footnotesize \textsc {A. Giles Flores, Universidad Aut\'onoma de Aguascalientes.} \\
Email: arturo.giles@edu.uaa.mx}
\end{document}